\newtheorem{lem}{Lemma}
\newtheorem{thrm}{Theorem}
\newtheorem{thrmrestate}{Theorem}
\newtheorem*{thrm*}{Theorem}
\newtheorem{thrmm}{Theorem}
\newtheorem{crl}{Corollary}
\theoremstyle{definition}  
\newtheorem{definition}{Definition}
\renewcommand{\le}{\leqslant}
\renewcommand{\ge}{\geqslant}
\let\intt\int
\renewcommand{\int}{\intt\limits}
\newcommand{\C}{\mathbb{C}} 						
\newcommand{\D}{\mathbb{D}} 						
\newcommand{\N}{\mathbb{N}}	                	    
\newcommand{\TT}{\mathbb{T}}	 					
\renewcommand{\H}{\mathcal{H}}                      
\newcommand{\M}{\mathcal{M}}                        
\newcommand{\B}{\mathbb{B}}                         
\renewcommand{\a}{\alpha}									
\renewcommand{\b}{\beta}									
\newcommand{\de}{\delta}									
\newcommand{\e}{\varepsilon}								
\newcommand{\z}{\zeta}										
\renewcommand{\k}{\kappa}									
\renewcommand{\l}{\lambda}									
\let\originalnu\nu											
\renewcommand{\nu}{\originalnu} 					        
\newcommand{\f}{\varphi}									
\renewcommand{\o}{\omega}									
\DeclareMathOperator{\Cl}{Cl}							    
\renewcommand{\Re}{\operatorname{Re}}		    	
\DeclareMathOperator{\Span}{span}							
\DeclareMathOperator{\id}{id}								
\DeclareMathOperator{\Mult}{Mult}
\title[The isomorphism problem for analytic discs]{The isomorphism problem for analytic discs with self-crossings on the boundary}
\author{Mikhail Mironov}
\address{
\newline \phantom{x}\,\, Mikhail Mironov,
\newline Department of Mathematics, Technion - Israel Institute of Technology, Haifa, Israel,
\newline Univ Gustave Eiffel, Univ Paris Est Creteil, CNRS, LAMA UMR8050 F-77447 Marne-la-Vallée, France
\newline {\tt mikhailm@campus.technion.ac.il}
\smallskip
}
\begin{document}

\begin{abstract}
    Suppose $V$ is the unit disc $\mathbb{D}$ embedded in the $d$-dimensional unit ball $\mathbb{B}_d$ and attached to the unit sphere. Consider the space $\mathcal{H}_V$, the restriction of the Drury-Arveson space to the variety $V$, and its multiplier algebra $\M_V = \Mult(\H_V)$.
    The isomorphism problem is the following: Is $V_1 \cong V_2$ equivalent to $\M_{V_1} \cong \M_{V_2}$?

    A theorem of Alpay, Putinar and Vinnikov states that for $V$ without self-crossings on the boundary $\M_V$ is the space of bounded analytic functions on $V$. We consider what happens when there are self-crossings on the boundary and prove that if $\M_{V_1} \cong \M_{V_2}$ algebraically, then $V_1$ and $V_2$ must have the same self-crossings up to a unit disc automorphism. We prove that an isomorphism between $\M_{V_1}$ and $\M_{V_2}$ can only be given by a composition with a map from $V_1$ to $V_2$. 
    In the case of a single simple self-crossing we show that there are only two possible candidates for this map and find these candidates. Finally, we provide a continuum of $V$'s with the same self-crossing pattern such that their multiplier algebras are all mutually non-isomorphic.
\end{abstract}

\maketitle

\section{Introduction}
    \subsection{The isomorphism problem}
        The main problem we are interested in is the isomorphism problem for multiplier algebras of varieties in the unit ball. In this section we state this problem in the general setting.
    
        Let $\B_d, \ d \in \N \cup \{ \infty \}$ denote the Euclidean open unit ball in $\C^d$, where $\C^d$ means $\ell^2(\N)$ for $d = \infty$. The main function space on $\B_d$ we consider is the Drury-Arveson space.
        \begin{definition}
            \emph{The Drury-Arveson space on $\B_d$} is
            \begin{equation*}
                H^2_d = \left\{ f(z) = \sum_{\a \in \N_0^d} c_{\a} z^{\a}: \: ||f||^2 = \sum_{\a \in \N_0^d} |c_{\a}|^2 \frac{\a!}{|\a|!} < \infty \right\}.
            \end{equation*}
            It is a reproducing kernel Hilbert space with kernel
            \begin{equation*}
                \k(z, w) = \k_w(z) =  \frac{1}{1 - \langle z, w \rangle_{\C^d}}.
            \end{equation*}
        \end{definition}
        This space is one of the multivariate generalizations of the Hardy space $H^2(\D)$ on the unit disc, i.e., $H^2(\D) = H^2_1$.
        The Drury-Arveson space naturally arises in operator theory as well as in theory of complete Pick spaces, see the survey \cite{shalit2015operator} or the more recent \cite{hartz2023invitation} for an introduction.
        
        We denote by $\M_d = \Mult (H^2_d)$ the multiplier algebra of $H^2_d$. We say that $V \subset \B_d$ is \emph{a multiplier variety} if it is a joint zero set of functions from $\M_d$, i.e., there exists $E \subset \M_d$ such that
        \begin{equation*}
            V = \{ z \in \B_d: \f(z) = 0, \text{ for all } \f \in E \}.
        \end{equation*}
        Note that for $d < \infty$ we can replace $E$ by a finite set (even of cardinality at most $d$), see the argument in \cite[Chapter 5, Section 7]{chirka1989complex}. In particular, such $V$ is an analytic variety in $\B_d$. Throughout this paper we simply refer to multiplier varieties as varieties.
        
        For a variety $V \subset \B_d$ the associated Hilbert space of functions on $V$ is $\H_V \subset H^2_d$:
        \begin{equation*}
            \H_V = \Cl \Span \{ \k_{\l}: \l \in V \} = \{ f \in H^2_d: \: f|_V = 0 \}^{\perp}.
        \end{equation*}        
        This Hilbert space is a reproducing kernel Hilbert space of functions on $V$. 
        
        Next, we define the algebra associated to $V$:
        \begin{equation*}
            \M_V = \{ \f|_V: \f \in \M_d \}.
        \end{equation*}
        By \cite[Proposition 2.6]{DSR15}, $\M_V$ is exactly the multiplier algebra $\Mult(\H_V)$, and $\M_V$ is completely isometrically isomorphic to the quotient $\M_d / J_V$, where
        \begin{equation*}
            J_V = \{ \f \in \M_d: \f|_V = 0 \}.
        \end{equation*}
        Moreover, $\M_V$ is contained in $H^{\infty}(V)$, the algebra of bounded analytic functions on $V$, but might not coincide with it.
    
        \textbf{The isomorphism problem:} When is $\M_V \cong \M_W$ equivalent to $V \cong W$? 
        \\
        The answer heavily depends on the notion of isomorphism for both varieties and algebras. 
        We recall a few possible notions. First, $V$ is said to be \emph{an automorphic image} of $W$ in $\B_d$ if there exists $\mu$, an automorphism of $\B_d$, such that $V = \mu(W)$. Since $\mu^{-1}$ is also an automorphism of $\B_d$ this notion is symmetric with respect to $V$ and $W$. We say that $V$ and $W$ are \emph{biholomorphic} if there are holomorphic in $\B_d$ maps $F$ and $G$ such that $F|_V$ is a bijecton from $V$ to $W$ and $G|_W$ is its inverse. Finally, we say that $V$ and $W$ in $\B_d$ are \emph{multiplier biholomorphic} if they are biholomorphic via maps $F, G$ which are coordinate multipliers, i.e.,
        $
            F, G \in \underbrace{\M_d \times \ldots \times \M_d}_{d}.     
        $

        We refer to the survey by Salomon and Shalit \cite{Sal} for an overview of the available answers to the isomorphism problem:
        \begin{itemize}
            \item $\M_V \cong \M_W$ isometrically if and only if $\H_V \cong \H_W$ as reproducing kernel Hilbert spaces if and only if $V$ is an automorphic image of $W$, where $d < \infty$ or $V$ and $W$ have the same affine codimension, \cite[Proposition 4.8, Theorem 4.6]{Sal}, \cite{DSR15}. 
            
            Similarly, Rochberg, \cite[Theorem 7]{rochberg2019complex}, has shown that for ordered finite sets $V, W$, $\H_V \cong \H_W$ if and only if $V$ is an automorphic image of $W$, adding some concrete quantitative conditions for an isomorphism to exist, such as coinciding ``normal forms'' or having the same Gram matrix. Rochberg also showed that $V \cong W$ if and only if all triples in $V$ are congruent to all triples in $W$.
            \item $\M_V \cong \M_W$ algebraically is equivalent to $V$ and $W$ being multiplier biholomorphic for homogeneous varieties $V$ and $W$ if $d < \infty$, \cite[Theorem 5.14]{Sal}, \cite{DRS11}, \cite{HarTop}.
            \item $\M_V \cong \M_W$ algebraically implies that $V$ and $W$ are multiplier biholomorphic, for $V$ and $W$ which are irreducible varieties (or finite unions of irreducible varieties and a discrete variety), \cite[Theorem 5.5]{Sal}, \cite{DSR15}.
            \item $V \cong W$ via a biholomorphism that extends to be a 1-to-1 $C^2$-map on the boundary implies $\M_V \cong \M_W$ algebraically, for $V$ and $W$ --- images of finite Riemann surfaces under a holomap that extends to be a 1-to-1 $C^2$-map on the boundary, \cite[Corollary 5.18]{Sal}. This is the result of an idea from \cite{Vin} being generalized in \cite[Section 2.3.6]{ARS08} and culminating in \cite{Kerr}.
            \item For $V \subset \B_{\infty}$ of the form $V = f(\D)$ with
            \begin{equation*}
                f(z) = (b_1 z, b_2 z^2, b_3 z^3, \ldots), \quad \text{ where } b_1 \ne 0 \text{ and } ||b||_2^2 = 1 
            \end{equation*}
            $\M_V = H^{\infty}(V) \cong H^{\infty}(\D)$ if and only if $\sum n |b_n|^2 < \infty$, while any two such varieties are multiplier biholomorphic, \cite[Corollary 7.4]{Dav}.
        \end{itemize}
         It is still an open question whether $V \cong W$ via a multiplier biholomorphism implies $\M_V \cong \M_W$ algebraically for sufficiently simple $V$ and $W$. Though, it is known to be false for $V$, $W$ --- discrete varieties, see \cite[Example 5.7]{Sal}.

  \subsection{Complete Pick spaces}
        A different way to look at the multiplier algebras $\M_V$ comes from function theory, in particular from the theory of complete Pick spaces. 
        
        Let $\H$ be a reproducing kernel Hilbert space on a set $X$ with kernel $\k$ and let $\M$ be its multiplier algebra $\Mult(\H)$. Consider the following problem. Given $\l_1, \ldots, \l_n \in X$ and $a_1, \ldots, a_n \in \C$, does the interpolation problem
        \begin{equation*}
            \f(\l_j) = a_j, \quad j = 1, \ldots, n
        \end{equation*}
        have a multiplier solution $\f \in \M, \ ||\f||_{\M} \le 1$?
        
        This problem is called \emph{the Pick interpolation problem}\/. Denote by $M_{\f}$ the operator on $\H$ given by multiplying by $\f$. By definition the norm on $\M$ is inherited from the operator norm, hence, $||\f||_{\M} \le 1$ is equivalent to $I_{\H} - M_{\f} M^*_{\f} \ge 0$. Using this fact together with $M^*_{\f} \k_x = \overline{\f(x)} \k_x$ we get a necessary condition for the Pick interpolation problem to have a solution, i.e., 
        \begin{equation} \label{eq: Pick positive matrix}
            \left[ \left( 1 - a_j \overline{a_k} \right) \k(\l_j, \l_k) \right]_{j, k = 1}^n \ge 0.
        \end{equation}
        Pick, \cite{pick1915}, showed that for the Hardy space $H^2(\D)$, which has the Szego kernel 
        \begin{equation*}
            \k(z, w) = \frac{1}{1 - z \bar w},
        \end{equation*}
        this positivity condition is also sufficient. Spaces and their kernels for which the positivity of \eqref{eq: Pick positive matrix} is sufficient for the Pick interpolation problem to have a solution are called \emph{Pick spaces} and \emph{Pick kernels} respectively. It is not difficult to see that not all spaces are Pick, e.g., the Bergman space on the unit disc, namely, the space with the kernel
        \begin{equation*}
            \k(z, w) = \frac{1}{(1 - z \bar w)^2}
        \end{equation*}
        is not Pick.
        
        Let us now turn our attention to the matrix version of the Pick interpolation problem. We consider $\M^{m \times m}$, the algebra of $m \times m$ multiplier matrices, so that $\M^{m \times m} = \Mult(\H^m)$. Then, the $m \times m$ Pick interpolation problem is the following: Given $\l_1, \ldots, \l_n \in X$ and $A_1, \ldots, A_n \in M_m(\C)$, does the interpolation problem
        \begin{equation*}
            F(\l_j) = A_j, \quad j = 1, \ldots, n
        \end{equation*}
        have an $m \times m$ multiplier solution $F \in \M^{m \times m}, \ ||F||_{\M^{m \times m}} \le 1$?
        
        Similarly to the $m = 1$ case, it is possible to show the necessity of the positivity condition
        \begin{equation*}
            \left[ \left( 1 - A_j A_k^* \right) \k(\l_j, \l_k) \right]_{j, k = 1}^n \ge 0.
        \end{equation*}
        If this condition is sufficient for all $m \ge 1$ we call the space and the respective kernel \emph{complete Pick}.  
        We refer to the monograph of Agler and McCarthy, \cite{AglBook}, for an in-depth introduction to the study of (complete) Pick kernels, and to \cite[Chapter 5, Section 3]{quiggin1994generalisations} for an example of a space which is Pick but not complete Pick. 

        We say that an RKHS $\H$ on $X$ with kernel $\k$ is \emph{irreducible} if $\k(x, y)$ does not vanish for $x, y \in X$.
        The connection between complete Pick spaces and our isomorphism problem is given by the following theorem due to Agler and McCarthy, \cite[Theorem 4.2]{Agl}, \cite[Theorem 8.2]{AglBook}. 
        \begin{thrmm} \label{thrmm: Agler-McCarthy}
            Let $\H$ be a separable irreducible reproducing kernel Hilbert space on $X$. Then $\H$ is complete Pick if and only if for some $d \in \N \cup \{ \infty \}$ there is a map $b: X \to \B_d$ and a nowhere vanishing function $\de: X \to \C$ such that
            \begin{equation*}
                \k(z, w) = \frac{\de(z) \overline{\de(w)}}{1 - \langle b(z), b(w) \rangle_{\C^d}}.
            \end{equation*}
            Meaning that up to rescaling and composition $\H$ is essentially $\H_V$ for some variety $V \subset \B_d$. Moreover, $\M$ is isometrically isomorphic to $\M_V$ via the same composition.
        \end{thrmm}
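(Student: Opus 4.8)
The plan is to normalize the kernel and reduce the whole statement to the positivity of a single auxiliary kernel, treating the two directions separately. Since $\H$ is irreducible, $\k$ is nowhere vanishing, so I fix a base point $x_0 \in X$ and rescale by $\de_0(x) = \k(x, x_0)/\k(x_0, x_0)^{1/2}$, replacing $\k$ by the normalized kernel
\[
    \tilde{\k}(x, y) = \frac{\k(x, y)\, \k(x_0, x_0)}{\k(x, x_0)\, \k(x_0, y)},
\]
which satisfies $\tilde{\k}(\cdot, x_0) \equiv 1$. Multiplying a kernel by a factor $\de(x)\overline{\de(y)}$ with $\de$ nowhere vanishing neither creates nor destroys the complete Pick property — it only conjugates the multiplication operators by a fixed diagonal unitary — so it suffices to produce the representation for $\tilde{\k}$ and then fold $\de_0$ back in.

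For the ``if'' direction, assume $\tilde{\k}(x, y) = 1/(1 - \langle b(x), b(y) \rangle)$ with $b \colon X \to \B_d$. Then $\tilde{\k}$ is the pullback along $b$ of the Drury--Arveson kernel on $\B_d$, which is itself complete Pick (this is Drury's inequality, equivalently the realization theorem for $H^2_d$). The complete Pick property is inherited by pullbacks $\k' \mapsto \k' \circ (b \times b)$, since a multiplier $\psi$ solving a Pick problem for $\k'$ at the points $b(x_i)$ yields the solution $\psi \circ b$ for the pullback at the $x_i$ without increasing the norm; combined with invariance under the rescaling $\de_0$, this shows $\H$ is complete Pick.

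The substance is the ``only if'' direction, which I organize around the McCullough--Quiggin criterion: a normalized kernel $\tilde{\k}$ is complete Pick if and only if $1 - 1/\tilde{\k}$ is a positive semidefinite kernel on $X$. Granting this, positive semidefiniteness is a finite condition and factors through a Kolmogorov/Moore decomposition: there is a Hilbert space $\mathcal{K}$ and a map $b \colon X \to \mathcal{K}$ with
\[
    1 - \frac{1}{\tilde{\k}(x, y)} = \langle b(x), b(y) \rangle_{\mathcal{K}}.
\]
Identifying $\mathcal{K}$ with $\C^d$, $d = \dim \mathcal{K} \in \N \cup \{\infty\}$, and rearranging gives $\tilde{\k}(x, y) = 1/(1 - \langle b(x), b(y) \rangle)$. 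Since $\tilde{\k}(x, x)$ is a finite positive number, $1 - \|b(x)\|^2 = 1/\tilde{\k}(x, x) > 0$, so $b$ maps into $\B_d$. Reinserting $\de_0$ produces the asserted formula for $\k$ with $\de = \de_0$.

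The main obstacle is the nontrivial implication of the criterion, that complete Pick forces $1 - 1/\tilde{\k} \ge 0$. I would argue on an arbitrary finite subset $\{x_0, x_1, \dots, x_n\}$, where the Gram matrix $G = [\tilde{\k}(x_i, x_j)]$ is positive semidefinite with first row and column equal to $1$ (one may discard redundant points to make it invertible), and reduce to the matricial statement $[1 - 1/G_{ij}] \ge 0$. The point is to feed the \emph{complete}, matrix-valued Pick hypothesis, rather than the scalar one, into a lurking-isometry / Schur-complement computation: solvability of every admissible matricial Pick problem on these points is equivalent to a factorization of $G$ that, after a Hahn--Banach separation in the cone of Pick matrices, is in turn equivalent to positivity of the defect $[1 - 1/G_{ij}]$; letting the finite sets exhaust $X$ gives $1 - 1/\tilde{\k} \ge 0$. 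This matricial positivity step is the delicate one, the remaining items (normalization, factorization, the ball estimate) being routine. Finally, for the ``moreover'', the rescaling and composition assemble into a unitary $U \colon \H_V \to \H$, $U \k^V_{b(y)} = \overline{\de(y)}^{-1} \k_y$, where $V$ is the variety generated by $b(X) \subset \B_d$; a direct check gives $U M_\psi^{V} U^{*} = M_{\psi \circ b}$, so $\psi \mapsto \psi \circ b$ is an isometric isomorphism of $\M_V$ onto $\M$.
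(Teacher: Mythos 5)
The paper offers no proof of this statement: it is quoted as a known theorem of Agler and McCarthy, with the proof delegated to the cited references. Your outline — normalize the kernel at a base point, reduce both directions to the McCullough--Quiggin criterion that a normalized irreducible kernel is complete Pick iff $1 - 1/\tilde{\k}$ is positive semidefinite, obtain $b$ by Kolmogorov factorization of that positive kernel, and realize the isomorphism $\M \cong \M_V$ by conjugating with the weighted composition unitary $U$ — is precisely the argument of the cited source (with the one genuinely hard step, the necessity of the positivity of $1-1/\tilde{\k}$, correctly identified and sketched via cone separation), so it matches the intended proof.
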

        
        We see that the Drury-Arveson space is, in a sense, the universal complete Pick space. Hence, the isomorphism problem is closely related to the study of irreducible complete Pick spaces and their multiplier algebras.

    \subsection{Analytic Discs} \label{subsec: Analytic Discs}   
    
        In this paper we study the isomorphism problem in the case where $V$ and $W$ are analytic discs attached to the unit sphere. 
        \begin{definition} \label{def: analytic disc}
            \emph{An analytic disc attached to the unit sphere} is a variety $V \subset \B_d, \, d < \infty$ for which there exists an injective analytic map $f: \D \to \B_d$ with $f'(z) \ne 0, \, z \in \D$ such that $V = f(\D)$, $f$ extends to $C^2$ up to $\overline{\D}$ and 
            \begin{equation*}
                ||f(x)|| = 1 \iff |x| = 1.  
            \end{equation*}            
            We say that $f$ is \emph{an embedding map} of $V$.
        \end{definition}
        Note that by \cite[Corollary 3.2]{Dav} such discs meet the boundary transversally, i.e., 
        \begin{equation*}
            \langle f(\xi), f'(\xi) \rangle \ne 0, \quad \xi \in \TT.     
        \end{equation*}
        We emphasize that $V$ is defined to be a variety. It is not clear whether for an arbitrary $f$ satisfying properties from Definition \ref{def: analytic disc} the image $V = f(\D)$ is a variety.
        
        Instead of working directly with the spaces $\H_V$ and $\M_V$, it is more convenient to pull back this spaces from the variety $V$ to the unit disc $\D$.
        We denote by $\H_f$ an RKHS on $\D$ that we get from $\H_V$ by composing with $f$:
        \begin{equation*}
            \H_f = \{ h:\D \to \C: \: h \circ f^{-1} \in \H_V \}, \qquad ||h||_{\H_f} = || h \circ f^{-1} ||_{\H_V}.
        \end{equation*}
        The reproducing kernel of this space is
        \begin{equation} \label{eq: k^f formula}
            \k^f(z, w) = \frac{1}{1 - \langle f(z), f(w) \rangle}.
        \end{equation}
        We denote the multiplier algebra of this space by $\M_f$, so that
        \begin{equation*}
            \M_f = \{ \f:\D \to \C: \: \f \circ f^{-1} \in \M_V \}, \qquad ||\f||_{\M_f} = || \f \circ f^{-1} ||_{\M_V}.
        \end{equation*}
    
        The isomorphism problem was partially solved in the case when $f$ extends injectively to $\overline{\D}$. We state the results \cite[Proposition 2.2, Theorem 2.3]{Vin} combined with the above-mentioned fact that the transversality condition is satisfied automatically.
        \begin{thrmm} \label{thrmm: injective embedding}
            Suppose $V$ is an analytic disc attached to the unit sphere, $f$ is the embedding map of $V$. If the extension of $f$ to $\overline{\D}$ is injective, then 
            \begin{itemize}
                \item $\H_f = H^2(\D)$ with equivalent norms,
                \item $\M_f = H^{\infty}(\D)$ with equivalent norms.
            \end{itemize}
        \end{thrmm}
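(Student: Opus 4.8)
The plan is to reduce the statement to a two-sided comparison of reproducing kernels and then to establish that comparison from the boundary regularity of $f$. Denote by $\k^{\D}(z,w)=\frac{1}{1-z\bar w}$ the Szegő kernel of $H^2(\D)$, and recall from \eqref{eq: k^f formula} that $\H_f$ has kernel $\k^f(z,w)=\frac{1}{1-\langle f(z),f(w)\rangle}$. By the Schur test applied to the constant multiplier $1$ (equivalently, by Aronszajn's inclusion theorem), the inclusion $\H(\k^f)\hookrightarrow H^2(\D)$ is bounded by $\sqrt b$ if and only if $b\,\k^{\D}-\k^f\succeq0$ as a kernel on $\D$, and $H^2(\D)\hookrightarrow\H(\k^f)$ is bounded by $\sqrt{1/a}$ if and only if $\k^f-a\,\k^{\D}\succeq0$. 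Hence it suffices to produce constants $0<a\le b$ with $a\,\k^{\D}\preceq\k^f\preceq b\,\k^{\D}$; then $\H_f=H^2(\D)$ as sets with equivalent norms, the equivalence constants being determined by $a$ and $b$.

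The analytic heart is to control the sesqui-analytic ratio $r(z,w)=\dfrac{1-\langle f(z),f(w)\rangle}{1-z\bar w}$ on $\D\times\D$. First, $r$ has no isolated zeros: for $z\in\D$ one has $\|f(z)\|<1$, so $|\langle f(z),f(w)\rangle|<1$; and if $z=\xi$, $w=\eta\in\TT$ with $\langle f(\xi),f(\eta)\rangle=1$, then the Cauchy--Schwarz equality case forces $f(\xi)=f(\eta)$ (both are unit vectors), whence $\xi=\eta$ by the injectivity of $f$ on $\overline{\D}$ --- but there the denominator vanishes as well, since $1-z\bar w$ vanishes only on the boundary diagonal. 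On that diagonal I would compute the radial limit: writing $\phi(t)=\|f(t\xi)\|^{2}$ and using the $C^{2}$ extension, $\phi(1)=1$ and $\phi'(1)=2\Re\langle\xi f'(\xi),f(\xi)\rangle$, so
\[ r(\xi,\xi)=\lim_{t\to1^{-}}\frac{1-\phi(t)}{1-t^{2}}=\tfrac12\phi'(1)=\Re\langle\xi f'(\xi),f(\xi)\rangle. \]
Differentiating $\|f(\xi)\|^{2}\equiv1$ tangentially along $\TT$ shows $\xi\,\overline{\langle f(\xi),f'(\xi)\rangle}$ is real, so this limit is a real number; it is nonnegative because $\phi(t)\le1=\phi(1)$ for $t\le1$, and nonzero by the transversality $\langle f(\xi),f'(\xi)\rangle\ne0$ of \cite{Dav}, hence strictly positive. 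By the $C^{2}$ regularity and the compactness of $\TT$, a two-variable expansion makes these estimates uniform, so $r$ extends continuously to $\overline{\D}\times\overline{\D}$ with $0<a'\le|r|\le b'<\infty$; equivalently $|1-\langle f(z),f(w)\rangle|\asymp|1-z\bar w|$ uniformly on $\D\times\D$.

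The main obstacle is to upgrade these modulus bounds to the kernel dominations $a\,\k^{\D}\preceq\k^f\preceq b\,\k^{\D}$, since $|r|\asymp1$ does not by itself yield positivity of $b\,\k^{\D}-\k^f$. Here I would exploit that $\k^f$ and $\k^{\D}$ are both normalized complete Pick kernels --- indeed $1-1/\k^f=\langle f(z),f(w)\rangle$ and $1-1/\k^{\D}=z\bar w$ are positive kernels --- and combine this structural fact with the boundary control of $r$ to force the comparison; this is the crux of the argument. A parallel route I would pursue is through boundary values: because $f$ is injective and $C^{2}$ on $\overline{\D}$ and meets the sphere transversally, $f(\TT)$ is a simple closed $C^{2}$ curve, the functions of $\H_f\cong\H_V$ admit boundary traces on it, and pulling back by the boundary diffeomorphism $f|_{\TT}$ (whose derivative does not vanish) identifies the $\H_V$-norm with an $L^{2}(\TT)$-norm comparable to the $H^2(\D)$-norm. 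Note that injectivity of $f$ on $\TT$ is exactly what makes $f|_{\TT}$ a diffeomorphism onto a simple curve, and is precisely the hypothesis that will fail once self-crossings are allowed.

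Finally, the multiplier statement is immediate once $\H_f=H^2(\D)$ holds with equivalent norms. For any $\varphi$, the operator $M_{\varphi}$ is bounded on $\H_f$ if and only if it is bounded on $H^2(\D)$, since the identity is a bounded invertible map intertwining the two multiplication operators; consequently $\M_f=\Mult(\H_f)=\Mult(H^2(\D))=H^{\infty}(\D)$ as sets, and the multiplier norms are equivalent with constants controlled by $a$ and $b$. This establishes both bullet points.
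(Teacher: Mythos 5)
First, a remark on scope: the paper does not prove this statement at all --- it is quoted verbatim from Alpay--Putinar--Vinnikov \cite[Proposition 2.2, Theorem 2.3]{Vin}, so there is no in-paper proof to compare against. Your proposal must therefore stand on its own, and it does not: it contains a genuine gap, one you yourself flag as ``the crux of the argument.'' The reduction to a two-sided kernel domination $a\,\k^{\D}\preceq\k^{f}\preceq b\,\k^{\D}$ via Aronszajn's inclusion theorem is correct, and the boundary analysis showing $|1-\langle f(z),f(w)\rangle|\asymp|1-z\bar w|$ uniformly on $\D\times\D$ (using the $C^{2}$ extension, injectivity on $\overline{\D}$, the Cauchy--Schwarz equality case, and transversality $A_f(\xi)=\langle f(\xi),f'(\xi)\xi\rangle>0$) is believable modulo a careful two-variable Taylor expansion near the boundary diagonal. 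But comparability of the \emph{moduli} of two sesqui-analytic kernels does not imply that their difference (after scaling) is a positive semidefinite kernel, and you offer no argument for this implication beyond the observation that both kernels are normalized complete Pick. That observation alone does not close the gap: by a Leech/lurking-isometry argument, $\k^{f}\preceq b\,\k^{\D}$ is equivalent to factoring $\frac{1-z\bar w}{1-\langle f(z),f(w)\rangle}$ as $\langle\Phi(z),\Phi(w)\rangle$ for a column of $H^{\infty}$ functions of norm at most $\sqrt b$, which is a genuinely stronger statement than boundedness of that ratio. This missing step is precisely the technical content of the cited theorem, so the proposal in effect reduces the theorem to itself.

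The ``parallel route'' in your third paragraph is also not a proof as written: the assertion that functions in $\H_V$ admit boundary traces on $f(\TT)$ and that the $\H_V$-norm is comparable to an $L^{2}(\TT)$-norm is essentially a restatement of the conclusion $\H_f=H^{2}(\D)$, not a derivation of it; one would need an independent identification of the $\H_V$-norm (e.g.\ via a Carleson-measure or reproducing-formula argument on the curve, which is how arguments of this type are actually carried out in \cite{Vin} and its successors \cite{ARS08}, \cite{Kerr}). The final paragraph is fine: once $\H_f=H^{2}(\D)$ with equivalent norms is granted, the identity of the multiplier algebras with equivalence of multiplier norms follows immediately, since the two spaces have the same elements. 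So the skeleton and the first-order boundary computation are sound, but the theorem is not proved.
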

    
        We now state our results.
        \begin{thrm} \label{thrm: algebras to crossings}
            Suppose $V$, $W$ are analytic discs attached to the unit sphere, and $f$, $g$ are the respective embedding maps. If $\M_f = \M_g$, then
            \begin{equation*}
                f(\xi) = f(\zeta) \iff g(\xi) = g(\zeta), \quad \xi, \zeta \in \TT. 
            \end{equation*}
            This means that $f$ and $g$ have the same self-crossings on the boundary.
        \end{thrm}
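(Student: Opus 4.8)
The plan is to detect the boundary self-crossings of $f$ intrinsically from the algebra $\M_f$, by showing that a self-crossing $f(\xi)=f(\zeta)$ forces every multiplier to take the same boundary value at $\xi$ and $\zeta$, while two non-coinciding boundary points can always be separated by a multiplier that is continuous up to $\overline{\D}$. Since $\M_f=\M_g$ as algebras of functions on $\D$, these two facts together pin down the crossing relation. By symmetry it suffices to prove that $f(\xi)=f(\zeta)$ implies $g(\xi)=g(\zeta)$, so I would assume for contradiction that $f(\xi)=f(\zeta)=p$ (with $\|p\|=1$) while $g(\xi)\neq g(\zeta)$.

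For the separation step I produce an explicit multiplier distinguishing $\xi$ and $\zeta$ through $g$. Taking the linear functional $\Psi(y)=\langle y,\,g(\xi)-g(\zeta)\rangle$, which is a polynomial and hence lies in $\M_d$, set $\psi=\Psi\circ g$. Then $\psi\circ g^{-1}=\Psi|_W\in\M_W$, so $\psi\in\M_g$; and since $g$ extends to a $C^2$ map on $\overline{\D}$, $\psi$ extends continuously to $\overline{\D}$ with $\psi(\xi)-\psi(\zeta)=\|g(\xi)-g(\zeta)\|^2>0$. By hypothesis $\psi\in\M_f$ as well, so $\psi$ is a multiplier of $\H_f$ which genuinely separates $\xi$ and $\zeta$.

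The heart of the argument is a matching estimate in $\H_f$. Writing $\hat{\k}^f_z=\k^f_z/\|\k^f_z\|$ and normalizing $\psi$ to be a contractive multiplier, the $2\times2$ case of the necessary positivity condition \eqref{eq: Pick positive matrix}, after dividing by $\k^f(z,z)\k^f(w,w)$, becomes
\[
  \bigl(1-|\psi(z)|^2\bigr)\bigl(1-|\psi(w)|^2\bigr)\ge \bigl|1-\overline{\psi(z)}\,\psi(w)\bigr|^2\,|c(z,w)|^2, \qquad c(z,w)=\langle \hat{\k}^f_z,\hat{\k}^f_w\rangle
\]
(up to the positive normalizing constant, which I suppress). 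I then choose \emph{synchronized} boundary approaches: transversality gives $\langle f'(\xi),f(\xi)\rangle\neq0$, so the analytic function $z\mapsto\langle f(z),p\rangle$ is nonconstant, maps $\D$ into $\D$, and has nonvanishing derivative at $\xi$, and likewise at $\zeta$. Hence, by local inversion near the boundary, for real $s\to1^-$ I can select $z_s\to\xi$ and $w_s\to\zeta$ in $\D$ with $\langle f(z_s),p\rangle=\langle f(w_s),p\rangle=s$. Decomposing $f(z_s)=sp+a_\perp$ and $f(w_s)=sp+b_\perp$ with $a_\perp,b_\perp\perp p$ of size $O(1-s)$, a direct expansion gives $1-\|f(z_s)\|^2=1-|s|^2-\|a_\perp\|^2$ and $1-\langle f(w_s),f(z_s)\rangle=1-|s|^2-\langle b_\perp,a_\perp\rangle$, where the perpendicular terms are $O\bigl((1-s)^2\bigr)$ while $1-|s|^2\sim 2(1-s)$; thus $|c(z_s,w_s)|\to1$.

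Feeding this into the displayed inequality and using continuity of $\psi$ at $\xi,\zeta$ (so $\psi(z_s)\to\psi(\xi)$ and $\psi(w_s)\to\psi(\zeta)$), the limit reads $(1-|a|^2)(1-|b|^2)\ge|1-\bar a b|^2$, where $a,b$ are the values at $\xi,\zeta$ of $\psi/\|\psi\|_{\M_f}$. Since $|1-\bar a b|^2=(1-|a|^2)(1-|b|^2)+|a-b|^2$, this forces $a=b$, i.e. $\psi(\xi)=\psi(\zeta)$, contradicting the separation step; interchanging $f$ and $g$ gives the reverse implication. The step I expect to be the main obstacle is precisely the alignment claim $|c(z_s,w_s)|\to1$: it is the only place where the hypotheses (the $C^2$ extension and the transversality $\langle f(\xi),f'(\xi)\rangle\neq0$) are essential, and it requires care both in justifying the synchronized approach — existence of $z_s,w_s\to\xi,\zeta$ inside $\D$ via local inversion of $z\mapsto\langle f(z),p\rangle$ near the boundary — and in controlling the error terms. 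Everything else is formal once this Julia–Carathéodory-type alignment is established.
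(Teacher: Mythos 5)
Your proof is correct, and its overall architecture coincides with the paper's: a boundary self-crossing of one embedding forces every multiplier that extends continuously to $\overline{\D}$ to identify the two boundary points, and the coordinates of the other embedding (in your case packaged into the single linear test functional $\Psi$) are continuous up to $\overline{\D}$ and would separate the points if the crossings did not match. Where you genuinely diverge is in how the identification lemma is proved. The paper first establishes it for functions $h\in\H_f$ via the membership criterion $\bigl(\k^f(z,w)-h(z)\overline{h(w)}\bigr)\ge 0$, which only yields a uniform bound $|h(\xi)-h(\zeta)|^2\le C\,\|h\|_{\H_f}^2$; it must then argue by duality that the resulting bounded functional vanishes (it annihilates all kernel functions), and finally transfers the conclusion to multipliers by multiplying with $\k_0^f$. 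You instead apply the $2\times2$ case of the necessary Pick condition \eqref{eq: Pick positive matrix} directly to the normalized multiplier; since the Schur-class inequality $(1-|a|^2)(1-|b|^2)\ge|1-\bar a b|^2\,|c|^2$ degenerates to $|a-b|^2\le 0$ as $|c|\to 1$, you obtain the identification in one stroke, with no duality step and no detour through $\H_f$ --- a slightly slicker route for this particular theorem (though the paper's Hilbert-space version is reused later, e.g.\ in Corollary \ref{crl: k - k w 0}). The asymptotic heart is identical in both arguments: a two-sided boundary approach calibrated by the transversality constants $A_f(\xi),A_f(\zeta)>0$ and a second-order expansion showing that the normalized kernel correlation tends to $1$; the paper's computation culminating in \eqref{eq: det:LHS.2} is exactly your claim $|c(z_s,w_s)|^2=1-O(1-s)$. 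The one point where you should add a line of care is the existence of the exactly synchronized points with $\langle f(z_s),p\rangle=\langle f(w_s),p\rangle=s$ real: this does follow from transversality by local inversion at the boundary, but it is simpler to take radial approaches $z_s=(1-t/A_f(\xi))\xi$ and $w_s=(1-t/A_f(\zeta))\zeta$ as the paper effectively does, matching only the first-order rates; the resulting mismatch is $O(t^2)$ and the same limit $|c|\to 1$ holds.
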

        \begin{thrm} \label{thrm: isomorphic algebras to automorphism}
            Suppose $V$, $W$ are analytic discs attached to the unit sphere, and $f$, $g$ are the respective embedding maps. If $\M_f \cong \M_g$ algebraically, then there exists $\mu$, an automorphism of the unit disc $\D$, such that $\M_f = \M_{g \circ \mu}$ with equivalent norms.
        \end{thrm}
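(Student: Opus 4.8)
The plan is to reduce the statement to the rigidity theorem for isomorphisms of multiplier algebras of irreducible varieties, \cite[Theorem 5.5]{Sal}, and then to descend the resulting biholomorphism of varieties to an automorphism of $\D$. First I would move from the disc to the ambient varieties: by definition, composition with $f$ (resp.\ $g$) is an isometric isomorphism $\M_V \to \M_f$ (resp.\ $\M_W \to \M_g$), so the hypothesis $\M_f \cong \M_g$ algebraically is equivalent to $\M_V \cong \M_W$ algebraically, where $V = f(\D)$ and $W = g(\D)$. Since $f$ and $g$ are injective with non-vanishing derivative on the open disc, $V$ and $W$ are smooth connected, hence irreducible, analytic curves inside $\B_d$; note that the self-crossings live on the boundary sphere $\partial\B_d$ and so do not affect the structure of $V$ and $W$ as varieties in the open ball. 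Thus \cite[Theorem 5.5]{Sal} applies and produces a multiplier biholomorphism, and in particular a biholomorphism $F\colon V \to W$ (the restriction of a coordinate multiplier) implementing the isomorphism $\M_W \to \M_V$ by composition, $\psi \mapsto \psi \circ F$.

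Next I would descend $F$ to the disc. Since $g$ is a biholomorphism of $\D$ onto $W$, the map $\mu := g^{-1}\circ F \circ f \colon \D \to \D$ is a holomorphic bijection, hence $\mu \in \operatorname{Aut}(\D)$, and it satisfies $F\circ f = g\circ\mu$. As $\mu$ extends to a Möbius automorphism of $\overline{\D}$ carrying $\TT$ onto $\TT$, the composition $g\circ\mu$ is again an embedding map of $W$ in the sense of Definition \ref{def: analytic disc}, so $\M_{g\circ\mu}$ is defined and equals $\{\psi\circ g\circ\mu : \psi\in\M_W\}$. Transporting the composition description of the isomorphism through $f$ then gives
\begin{equation*}
    \M_f = \{\chi\circ f : \chi\in\M_V\} = \{(\psi\circ F)\circ f : \psi\in\M_W\} = \{\psi\circ(g\circ\mu) : \psi\in\M_W\} = \M_{g\circ\mu},
\end{equation*}
which is the desired equality of algebras as sets of functions on $\D$.

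Finally, for the equivalence of norms I would invoke automatic continuity: $\M_V$ and $\M_W$ are unital semisimple commutative Banach algebras (their characters include the point evaluations on $V$ and $W$, which separate functions), so the algebraic isomorphism and its inverse are automatically bounded; tracing a function $\chi\circ f = \psi\circ g\circ\mu$ through the isometric identifications then shows $\|\cdot\|_{\M_f}$ and $\|\cdot\|_{\M_{g\circ\mu}}$ are comparable on the common function space. The main obstacle is not the bookkeeping above but the input it relies on, namely that the abstract algebraic isomorphism is actually \emph{implemented by composition with a biholomorphism of the varieties}. This is the substance of the cited rigidity theorem, whose proof identifies the characters of $\M_V$ arising from interior point evaluations, shows that the induced homeomorphism of maximal ideal spaces carries the interior copy of $W$ onto that of $V$, and verifies that this map is holomorphic. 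The delicate point to watch for these discs is that the self-crossing boundary points do not interfere with this identification, so that the induced map restricts to a genuine biholomorphism of the \emph{open} discs and therefore to an automorphism of $\D$.
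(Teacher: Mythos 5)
Your proposal follows essentially the same route as the paper: pass from $\M_f \cong \M_g$ to $\M_V \cong \M_W$, invoke the rigidity theorem \cite[Theorem 5.5]{Sal} for irreducible varieties to realize the isomorphism as composition with a multiplier biholomorphism $F$, descend to $\mu = g^{-1}\circ F\circ f \in \operatorname{Aut}(\D)$, and get norm equivalence from automatic continuity for semisimple commutative Banach algebras. The one place where your write-up is thinner than the paper's is the irreducibility step: ``smooth connected, hence irreducible'' is not quite a proof, because the hypothesis of \cite[Theorem 5.5]{Sal} is the specific multiplier-theoretic notion of irreducibility from \cite[Section 5.1]{Sal} (every multiplier vanishing on a relatively open piece $V\cap B_\e(\l)$ must vanish on all of $V$). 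The paper devotes Lemma \ref{lem: irreducible varieties} to this, pulling such a multiplier back to $\f\circ f$ on $\D$ and applying the identity theorem; your intuition is exactly this, but the verification should be stated, since topological connectedness of $V$ alone is not literally the hypothesis being used.
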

        Combining these two theorems we get: 
        \begin{crl} \label{thrm: algebras to self-crossings}
            Suppose $V$, $W$ are analytic discs attached to the unit sphere, and $f$, $g$ are the respective embedding maps. If $\M_f \cong \M_g$ algebraically, then, up to a unit disc automorphism, $f$ and $g$ have the same self-crossings on the boundary.
        \end{crl}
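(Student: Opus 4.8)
The plan is to simply chain together the two preceding theorems, using Theorem \ref{thrm: isomorphic algebras to automorphism} to convert the abstract algebraic isomorphism into an honest equality of algebras and then feeding that equality into Theorem \ref{thrm: algebras to crossings}. Concretely, since $\M_f \cong \M_g$ algebraically by hypothesis, Theorem \ref{thrm: isomorphic algebras to automorphism} immediately supplies an automorphism $\mu$ of $\D$ for which $\M_f = \M_{g \circ \mu}$ with equivalent norms. The point of this first step is that Theorem \ref{thrm: algebras to crossings} is stated for genuinely equal algebras (as spaces of functions on $\D$), not merely isomorphic ones, so the automorphism $\mu$ is exactly the device that absorbs the ``gauge freedom'' in the problem and produces an equality we can use.

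Before invoking Theorem \ref{thrm: algebras to crossings}, I would first verify that $g \circ \mu$ is again a legitimate embedding map of the disc $W$, so that the theorem applies to the pair $(f, g \circ \mu)$. This is routine: $\mu$ is a biholomorphic automorphism of $\D$ with nowhere-vanishing derivative, so $g \circ \mu$ is injective and analytic on $\D$ with $(g \circ \mu)' = (g' \circ \mu)\,\mu' \ne 0$; it extends to $C^2$ on $\overline{\D}$ because $\mu$ extends to a smooth diffeomorphism of $\overline{\D}$; and since $\mu$ maps $\TT$ onto $\TT$ and $\D$ onto $\D$, we have $\|g(\mu(x))\| = 1 \iff |\mu(x)| = 1 \iff |x| = 1$. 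Hence $g \circ \mu$ is an embedding map of $W = g(\D) = (g \circ \mu)(\D)$.

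With the equality $\M_f = \M_{g \circ \mu}$ in hand and $g \circ \mu$ confirmed to be a valid embedding map, Theorem \ref{thrm: algebras to crossings} applied to $f$ and $g \circ \mu$ yields
\begin{equation*}
    f(\xi) = f(\zeta) \iff (g \circ \mu)(\xi) = (g \circ \mu)(\zeta), \quad \xi, \zeta \in \TT,
\end{equation*}
which says precisely that $f$ and $g \circ \mu$ have the same self-crossings on the boundary. As $\mu$ is a unit disc automorphism, this is exactly the assertion that, up to a unit disc automorphism, $f$ and $g$ have the same self-crossings, completing the argument. I do not expect any genuine obstacle in this corollary itself — all of the difficulty is packed into the two theorems being combined — and the only point deserving care is the distinction between equality and isomorphism of the algebras, which is resolved precisely by the reduction to $g \circ \mu$ furnished by Theorem \ref{thrm: isomorphic algebras to automorphism}.
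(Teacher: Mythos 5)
Your proof is correct and is exactly the paper's argument: the corollary is obtained by applying Theorem \ref{thrm: isomorphic algebras to automorphism} to replace the algebraic isomorphism with the equality $\M_f = \M_{g \circ \mu}$ and then invoking Theorem \ref{thrm: algebras to crossings} for the pair $(f, g\circ\mu)$. Your additional check that $g \circ \mu$ remains a valid embedding map is a harmless (and correct) bit of extra care that the paper leaves implicit.
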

        If $f$ and $g$ have the same self-crossings on the boundary up to a unit disc automorphism we say that they have the same \emph{self-crossing type}. Corollary \ref{thrm: algebras to self-crossings} means that for $f$ and $g$ with different self-crossing types the multiplier algebras are always non-isomorphic.
        
        Note that now we have a more complete solution in the injective case:
        Corollary \ref{thrm: algebras to self-crossings} together with Theorem \ref{thrmm: injective embedding} give us
        \begin{crl}
            Suppose $V$, $W$ are analytic discs attached to the unit sphere, and $f$, $g$ are the respective embedding maps and $f$ is injective up to $\overline{\D}$. Then $\M_f \cong \M_g$ algebraically if and only if $g$ is injective up to $\overline{\D}$.  
        \end{crl}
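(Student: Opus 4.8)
The plan is to prove the two implications separately, each reducing to one of the preceding results.

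For the reverse implication, suppose $g$ is injective up to $\overline{\D}$. Then Theorem \ref{thrmm: injective embedding} applies to both embedding maps, yielding $\M_f = H^{\infty}(\D) = \M_g$ with equivalent norms. In particular $\M_f$ and $\M_g$ coincide as algebras, so certainly $\M_f \cong \M_g$ algebraically.

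For the forward implication, suppose $\M_f \cong \M_g$ algebraically. By Corollary \ref{thrm: algebras to self-crossings}, $f$ and $g$ have the same self-crossing type; that is, there is an automorphism $\mu$ of $\D$ for which
\begin{equation*}
    f(\xi) = f(\zeta) \iff (g \circ \mu)(\xi) = (g \circ \mu)(\zeta), \qquad \xi, \zeta \in \TT.
\end{equation*}
Because $f$ is injective up to $\overline{\D}$, its restriction to $\TT$ is injective, so the left-hand equality forces $\xi = \zeta$. Hence $(g \circ \mu)(\xi) = (g \circ \mu)(\zeta)$ implies $\xi = \zeta$, i.e., $g \circ \mu$ is injective on $\TT$. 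Since $\mu$ restricts to a bijection of $\TT$, this shows that $g$ itself is injective on $\TT$.

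It remains to upgrade injectivity on $\TT$ to injectivity on all of $\overline{\D}$. By Definition \ref{def: analytic disc}, $g$ is already injective on $\D$, and the normalization $||g(x)|| = 1 \iff |x| = 1$ forces $g(\D) \subset \B_d$ and $g(\TT) \subset \partial \B_d$, so the images of the interior and of the boundary are disjoint. Combining injectivity on $\D$, injectivity on $\TT$, and this disjointness, we conclude that $g$ is injective on $\overline{\D}$, as required. I do not anticipate a genuine obstacle here: the corollary is largely a bookkeeping consequence of Theorem \ref{thrmm: injective embedding} and Corollary \ref{thrm: algebras to self-crossings}. The only step needing a short argument is this last reduction from $\TT$ to $\overline{\D}$, which rests on the geometric fact that an attached analytic disc sends its interior strictly inside the ball and its boundary onto the sphere.
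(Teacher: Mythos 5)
Your proof is correct and follows the same route the paper intends: the corollary is stated there as an immediate consequence of Theorem \ref{thrmm: injective embedding} (for the reverse direction) and Corollary \ref{thrm: algebras to self-crossings} (for the forward direction), which is exactly your decomposition. The only detail you add beyond the paper's one-line justification is the upgrade from injectivity on $\TT$ to injectivity on $\overline{\D}$, and your argument for it (injectivity on $\D$ from Definition \ref{def: analytic disc} plus disjointness of $g(\D)\subset\B_d$ and $g(\TT)\subset\partial\B_d$) is sound.
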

        
        In general, in light of Corollary \ref{thrm: algebras to self-crossings}, we found a coarse characteristic which separates non-isomorphic cases, so that it remains to solve the isomorphism problem only for embeddings with the same self-crossing type.
        However, even in the case of embeddings with the same self-crossing type there might be more than one isomorphism class of multiplier algebras, which we will show in the next section.

    \subsection{First order equivalence and examples} \label{subsec: A and examples}
        The results from the previous section can be thought of as the zeroth order equivalence, i.e. the alignments of values of the embedding map $f$. In this section we consider the effect that the derivative of $f$ has on the isomorphism problem. 
        
        Define $A_f(\xi) = \langle f(\xi), f'(\xi) \xi \rangle, \ \xi \in \TT$. From \cite[Corollary 3.2]{Dav} we know that $A_f(\xi) > 0$ for embedding maps $f$. We prove a theorem that shows that this function on the circle is an important semi-invariant for $\M_f$:
        \begin{thrm} \label{thrm: multipliers to bilip}
            Suppose $V$, $W$ are analytic discs attached to the unit sphere, and $f$, $g$ are the respective embedding maps. If $\M_f = \M_g$ with equivalent norms, then for $\xi, \z \in \TT$ such that $f(\xi) = f(\z)$ (and so Theorem \ref{thrm: algebras to crossings} implies $g(\xi) = g(\z)$) we have
            \begin{equation*}
                \frac{A_f(\xi)}{A_f(\z)} = \frac{A_g(\xi)}{A_g(\z)}.
            \end{equation*}
        \end{thrm}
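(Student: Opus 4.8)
The plan is to extract the ratio $A_f(\xi)/A_f(\zeta)$ from the boundary blow-up of the kernel $\k^f$ at a self-crossing, and then to argue that this datum is visible to the normed algebra $\M_f$. Fix $\xi,\zeta\in\TT$ with $f(\xi)=f(\zeta)=:p$. Writing $z=(1-u)\xi$ and $w=(1-v)\zeta$ and expanding to first order using the $C^2$-extension of $f$, the identity $\|f\|\equiv 1$ on $\TT$ (which makes $A_f$ real, while $A_f>0$ by transversality) yields
\[ 1-\langle f(z),f(w)\rangle = A_f(\xi)\,u + A_f(\zeta)\,v + o(u+v). \]
Hence $\k^f$ blows up not only on the diagonal but across the two branches, and the normalised kernels satisfy
\[ \frac{\k^f(z,w)}{\sqrt{\k^f(z,z)\,\k^f(w,w)}} \xrightarrow[\;u=v\to 0^+\;]{} \frac{2\sqrt{A_f(\xi)A_f(\zeta)}}{A_f(\xi)+A_f(\zeta)}. \]
Setting $t_f:=A_f(\xi)/A_f(\zeta)$, the limiting angle $\theta_f$ between the two branches satisfies $\cos\theta_f=2\sqrt{t_f}/(1+t_f)$, a strictly monotone function of $t_f$ on $(0,1]$; thus recovering $\theta_f$ recovers $t_f$ up to the harmless swap $t_f\leftrightarrow 1/t_f$, which only reflects relabelling $\xi\leftrightarrow\zeta$.

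Next I would phrase this angle purely in terms of $\M_f$. Because $\H_f$ is a complete Pick space, the least multiplier norm interpolating $(1-u)\xi\mapsto 0$, $(1-u)\zeta\mapsto 1$ equals $1/\sin\psi_f(u)$, where $\psi_f(u)$ is the finite-radius angle above; consequently
\[ \Gamma_f:=\lim_{u\to 0^+}\inf\bigl\{\|\f\|_{\M_f}:\ \f((1-u)\xi)=0,\ \f((1-u)\zeta)=1\bigr\}=\frac{A_f(\xi)+A_f(\zeta)}{|A_f(\xi)-A_f(\zeta)|}. \]
This is intrinsic to the normed algebra $\M_f$ together with the common disc parametrisation, the two radii being matched. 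As a warm-up that fixes the mechanism --- and essentially recovers Theorem \ref{thrm: algebras to crossings} --- a genuine value mismatch $\f(\xi)\ne \f(\zeta)$ is impossible for a multiplier: choosing radii with matched coordinates $A_f(\xi)u=A_f(\zeta)v$ drives the two kernels to be asymptotically parallel, so any nonzero gap between the eigenvalues $\overline{\f(z)},\overline{\f(w)}$ of $M_\f^*$ forces $\|M_\f\|\to\infty$.

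The main obstacle is that $\M_f=\M_g$ holds \emph{only} with equivalent norms, so directly one obtains only $C^{-1}\Gamma_g\le \Gamma_f\le C\Gamma_g$, which bounds but does not equate $t_f$ and $t_g$. To eliminate the constant $C$ I would pass from the single two-point problem to a family of extremal problems, concentrated at $p$, whose optimal $\M_f$-norms grow geometrically with a base that is a strictly monotone function of $t_f$: a fixed equivalence constant cannot distort a growth rate, since it disappears on taking $n$-th roots, so equal rates across $f$ and $g$ would force $t_f=t_g$. A natural candidate is a cluster of matched-coordinate pairs accumulating at $p$, combined with the product structure of the algebra; establishing that the associated optimal norms behave like $\kappa_f^{\,n}$ with $\kappa_f$ encoding $\theta_f$ is the technical heart of the argument. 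I would also use that near a single, non-crossing boundary point the multiplier norm is unchanged by rescaling the kernel by a positive constant, so there $\|\cdot\|_f$ and $\|\cdot\|_g$ are asymptotically isometric; this confines the equivalence constant away from the crossing and isolates $t_f$ as the sole crossing-dependent invariant. Combining these, $\theta_f=\theta_g$, that is, $A_f(\xi)/A_f(\zeta)=A_g(\xi)/A_g(\zeta)$.
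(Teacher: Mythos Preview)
Your setup matches the paper's: the same first-order expansion of $1-\langle f(z),f(w)\rangle$, the same use of the complete Pick extremal description to translate kernel angles into multiplier-norm data, and the same pseudo-metric $d_f(z,w)^2=1-\dfrac{|\k^f(z,w)|^2}{\k^f(z,z)\k^f(w,w)}$. You also correctly diagnose the obstacle: with the symmetric choice $u=v$, both $d_f$ and $d_g$ have nonzero limits, so bi-Lipschitz equivalence only yields $C^{-1}\sin\theta_g\le\sin\theta_f\le C\sin\theta_g$, which does not force $\theta_f=\theta_g$.

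The gap is in your proposed fix. The ``growth rate'' scheme is only a hope: you do not construct the family of extremal problems, do not show their optimal norms are asymptotically $\kappa_f^{\,n}$, and do not argue why $\kappa_f$ encodes $\theta_f$ alone rather than some mixture of local data at $p$ and global behaviour of $f$. Likewise, the claim that away from the crossing $\|\cdot\|_{\M_f}$ and $\|\cdot\|_{\M_g}$ are ``asymptotically isometric'' is unsupported (equivalent norms need not become equal in any limit). As written, this part is not a proof.

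The paper avoids all of this with a one-line trick you actually mention in your warm-up but then abandon: take the radii \emph{$f$-matched}, i.e.\ $A_f(\xi)\,u=A_f(\zeta)\,v=:t$. Then $1-\|f(z)\|^2$, $1-\|f(w)\|^2$, and $1-\langle f(z),f(w)\rangle$ are all $2t+o(t)$, so $d_f(z,w)\to 0$. Bi-Lipschitz equivalence --- with \emph{any} constant --- forces $d_g(z,w)\to 0$ along the \emph{same} sequence. But along that sequence, with $a=A_g(\xi)/A_f(\xi)$ and $b=A_g(\zeta)/A_f(\zeta)$, one computes
\[
d_g(z,w)^2\;\longrightarrow\;1-\frac{4ab}{(a+b)^2}=\frac{(a-b)^2}{(a+b)^2},
\]
which vanishes iff $a=b$, i.e.\ iff $A_f(\xi)/A_f(\zeta)=A_g(\xi)/A_g(\zeta)$. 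The point is that driving one metric to zero annihilates the equivalence constant for free; no growth-rate argument is needed. Rewriting your proof with this choice of $u,v$ in place of $u=v$ closes the gap immediately.
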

        If there is \emph{an $n$-point self-crossing}, i.e., $f(\xi_1) = \ldots = f(\xi_n)$, we can apply this theorem pairwise to obtain:
        \begin{crl} \label{crl: n point A A A}
            If there is an $n$-point self-crossing $f(\xi_1) = \ldots = f(\xi_n)$, then
            \begin{equation*}
                [A_f(\xi_1):\ldots:A_f(\xi_n)] = [A_g(\xi_1):\ldots:A_g(\xi_n)],
            \end{equation*}
            by which we mean that there is a scaling factor $c > 0$ such that $A_f(\xi_j) = c A_g(\xi_j), \ j = 1, \ldots, n$. 
        \end{crl}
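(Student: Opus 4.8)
The plan is to reduce the projective statement to a sequence of two-point applications of Theorem \ref{thrm: multipliers to bilip}. Since $f(\xi_1) = \ldots = f(\xi_n)$, every pair $(\xi_j, \xi_1)$ satisfies the hypothesis $f(\xi_j) = f(\xi_1)$ of that theorem, so I would apply it with $\xi = \xi_j$ and $\z = \xi_1$ to obtain
\[
    \frac{A_f(\xi_j)}{A_f(\xi_1)} = \frac{A_g(\xi_j)}{A_g(\xi_1)}, \qquad j = 1, \ldots, n.
\]
All quantities appearing here are strictly positive by \cite[Corollary 3.2]{Dav}, so these ratios are well-defined and the rearrangements below are legitimate.

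Next I would isolate, in each identity, the ``cross'' ratio $A_f(\xi_j)/A_g(\xi_j)$, which gives
\[
    \frac{A_f(\xi_j)}{A_g(\xi_j)} = \frac{A_f(\xi_1)}{A_g(\xi_1)} =: c, \qquad j = 1, \ldots, n.
\]
The right-hand side is independent of $j$, and $c > 0$ since both numerator and denominator are positive. Hence $A_f(\xi_j) = c\, A_g(\xi_j)$ for every $j$, which is exactly the claimed equality of projective tuples $[A_f(\xi_1):\ldots:A_f(\xi_n)] = [A_g(\xi_1):\ldots:A_g(\xi_n)]$ with scaling factor $c$.

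I do not expect any genuine obstacle here: the entire mathematical content sits in Theorem \ref{thrm: multipliers to bilip}, and the corollary is merely the observation that the pairwise comparison it supplies is \emph{transitive}, so the individual two-point equalities glue into a single global scaling constant. The only point that requires a word of care is the well-definedness and positivity of the ratios, both of which are guaranteed by the strict positivity of $A_f$ and $A_g$ noted above; beyond that, the argument is purely a rewriting of the pairwise identities.
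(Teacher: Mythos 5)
Your proposal is correct and matches the paper exactly: the corollary is stated as an immediate consequence of applying Theorem \ref{thrm: multipliers to bilip} pairwise, which is precisely your argument of fixing $\xi_1$ as the reference point and extracting the common scaling constant $c = A_f(\xi_1)/A_g(\xi_1)$. No further commentary is needed.
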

        This theorem allows us to better understand some sub-classes of the isomorphism problem with specific self-crossing types. 

        First, we consider analytic discs with exactly one simple self-crossing on the boundary. Since we can always apply an automorphism, we assume that $f(-1) = f(1)$ is the self-crossing and otherwise $f$ is injective in $\overline{\D}$. 
        We use Theorem \ref{thrm: multipliers to bilip} to show that in this case the isomorphism condition is rigid.
        \begin{thrm} \label{thrm: multiplier isomorphism two points to two auto}
            Suppose $V$, $W$ are analytic discs attached to the unit sphere, and $f$, $g$ are the respective embedding maps with the only self-crossing at $\pm 1$. Then $\M_f \cong \M_g$ algebraically if and only if $\M_f = \M_{g \circ \mu}$, where either
            \begin{equation*}
                \mu(z) = \frac{z - \a}{1 - \a z}, \text{ or} \qquad \mu(z) = \frac{\b - z}{1 - \b z}
            \end{equation*}
            with constants $\a, \b \in (-1, 1)$ that can be explicitly computed in terms of $A_f(\pm 1)$ and $A_g(\pm 1)$:
            \begin{equation*}
                \a = \frac{\sqrt{A_f(1)A_g(-1)} - \sqrt{A_f(-1)A_g(1)}}{\sqrt{A_f(1)A_g(-1)} + \sqrt{A_f(-1)A_g(1)}}, \qquad \b = \frac{\sqrt{A_f(1)A_g(1)} - \sqrt{A_f(-1)A_g(-1)}}{\sqrt{A_f(1)A_g(1)} + \sqrt{A_f(-1)A_g(-1)}}.
            \end{equation*}
        \end{thrm}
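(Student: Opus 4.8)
The plan is to deduce everything from the three theorems already in hand and then to eliminate the one remaining free parameter by an explicit computation. The reverse implication is the easy half: if $\M_f = \M_{g \circ \mu}$ for \emph{any} automorphism $\mu$ of $\D$, then composition with $\mu$ is an algebra isomorphism $\M_g \to \M_{g\circ\mu}$. Indeed, unwinding the pullback definitions gives $\M_{g\circ\mu} = \{\f\circ\mu : \f \in \M_g\}$ and $\|\f\circ\mu\|_{\M_{g\circ\mu}} = \|\f\|_{\M_g}$, so $\f \mapsto \f\circ\mu$ is even isometric. Hence $\M_f = \M_{g\circ\mu} \cong \M_g$ algebraically, which settles the ``if'' direction; the specific values of $\a,\b$ are only needed for the forward direction.

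For the forward implication, suppose $\M_f \cong \M_g$ algebraically. By Theorem \ref{thrm: isomorphic algebras to automorphism} there is an automorphism $\mu$ of $\D$ with $\M_f = \M_{g\circ\mu}$ with equivalent norms. First I would record that $\tilde g := g\circ\mu$ is again an embedding map of $W$ (injectivity, non-vanishing derivative, the $C^2$ extension and the boundary condition $\|\tilde g(x)\| = 1 \iff |x| = 1$ are all preserved since $\mu$ is a disc automorphism). Applying Theorem \ref{thrm: algebras to crossings} to $f$ and $\tilde g$, the equality $\M_f = \M_{\tilde g}$ forces the same self-crossings; since the only self-crossing of $f$ is $\{1,-1\}$ and the unique self-crossing pair of $\tilde g$ is $\{\mu^{-1}(1),\mu^{-1}(-1)\}$, we conclude that $\mu$ permutes $\{1,-1\}$. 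The next step is to classify such automorphisms: transporting to the right half-plane via the Cayley map $\psi(z) = (1+z)/(1-z)$, which sends $1 \mapsto \infty$ and $-1 \mapsto 0$, the automorphisms fixing $\{0,\infty\}$ are $w \mapsto \lambda w$ and those swapping them are $w \mapsto \lambda/w$ with $\lambda > 0$; pulling back yields exactly the two one-parameter families $\mu(z) = (z-\a)/(1-\a z)$ (fixing $\pm 1$) and $\mu(z) = (\b - z)/(1-\b z)$ (swapping $\pm 1$) with $\a,\b \in (-1,1)$.

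It remains to determine the parameter, and this is where Theorem \ref{thrm: multipliers to bilip} enters and where the last real degree of freedom is killed. The key computation is the transformation law of the semi-invariant $A$ under $\mu$: using $\tilde g' = (g'\circ\mu)\,\mu'$ together with the boundary identity $\xi\,\mu'(\xi)/\mu(\xi) = |\mu'(\xi)| > 0$ for $\xi \in \TT$, I would show that $A_{\tilde g}(\xi) = |\mu'(\xi)|\, A_g(\mu(\xi))$. Evaluating at $\xi = \pm 1$ with $|\mu'(\pm 1)|$ read off from each explicit form, Theorem \ref{thrm: multipliers to bilip} applied to the self-crossing $f(1) = f(-1)$ gives the single scalar equation $A_f(1)/A_f(-1) = A_{\tilde g}(1)/A_{\tilde g}(-1)$, which in the fixing case reads $\left(\frac{1+\a}{1-\a}\right)^2 = \frac{A_f(1)A_g(-1)}{A_f(-1)A_g(1)}$ and in the swapping case $\left(\frac{1+\b}{1-\b}\right)^2 = \frac{A_f(1)A_g(1)}{A_f(-1)A_g(-1)}$; solving each for the parameter, taking the positive square root since $\frac{1+t}{1-t} > 0$ for $t \in (-1,1)$, produces exactly the stated formulas. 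I expect the main obstacle to be the correct derivation of the law $A_{\tilde g}(\xi) = |\mu'(\xi)|\,A_g(\mu(\xi))$: one must track the conjugate-linearity of the inner product and invoke $\xi\,\mu'(\xi) = |\mu'(\xi)|\,\mu(\xi)$ to turn the a priori complex prefactor $\overline{\mu'(\xi)\,\xi}\,\mu(\xi)$ into the real positive factor $|\mu'(\xi)|$. Conceptually, the crux is that Theorem \ref{thrm: algebras to crossings} cuts the three-parameter automorphism group down to two one-parameter families, and the single ratio constraint of Theorem \ref{thrm: multipliers to bilip} — available precisely because there is exactly one two-point self-crossing — removes the last parameter, yielding the asserted rigidity.
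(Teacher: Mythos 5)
Your proposal is correct and follows essentially the same route as the paper's proof: Theorem \ref{thrm: isomorphic algebras to automorphism} produces the automorphism $\mu$, Theorem \ref{thrm: algebras to crossings} forces $\mu$ to permute $\{\pm 1\}$ and hence to lie in one of the two stated one-parameter families, and Theorem \ref{thrm: multipliers to bilip} combined with the transformation law $A_{g\circ\mu}(\xi) = |\mu'(\xi)|\,A_g(\mu(\xi))$ (which is exactly Lemma \ref{lem: A_f under mu}, since $(1-|\a|^2)/|\a-\xi|^2 = |\mu'(\xi)|$ on $\TT$) pins down the parameter to the stated value. The only cosmetic difference is your Cayley-transform classification of the automorphisms permuting $\{\pm 1\}$, which the paper simply writes down directly.
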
   

        Let us consider an example. Set
        \begin{equation*}
            b_r(z) = \frac{z - r}{1 - rz}, \quad -1 < r < 1,
        \end{equation*}
        exactly the automorphisms of the unit disc such that $b_r(\pm 1) = \pm 1$. Define
        \begin{equation*}
            f_r(z) = \frac{1}{\sqrt{2}} \left( z^2, b_r(z)^2 \right), \quad r > 0.
        \end{equation*}
        Then, according to \cite[Theorem 5.2]{Dav}, $V = f(\D)$ is a variety with the only self-crossing at $\pm 1$. Using Theorem \ref{thrm: multiplier isomorphism two points to two auto} we show:
        \begin{thrm} \label{thrm: f_r and f_s}
            $\M_{f_r} \ne \M_{f_s}$ for $r \ne s$.
        \end{thrm}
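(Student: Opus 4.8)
The plan is to pull the non-equality back to the semi-invariant $A$ from Theorem~\ref{thrm: multipliers to bilip}, evaluated at the self-crossing $\pm 1$. First I would record that by construction $b_r(\pm 1) = \pm 1$, so $f_r(1) = f_r(-1) = \tfrac{1}{\sqrt{2}}(1,1)$ and the only boundary self-crossing of $f_r$ is the two-point crossing at $\xi = 1$, $\z = -1$. Suppose, for contradiction, that $\M_{f_r} = \M_{f_s}$ (as algebras of functions on $\D$) for some $r \ne s$. Since convergence in either multiplier norm dominates the supremum norm on $\D$ (from $M_\f^* \k^f_x = \overline{\f(x)} \k^f_x$), hence pointwise convergence, the identity map $\M_{f_r} \to \M_{f_s}$ has closed graph; by the closed graph theorem the two multiplier norms are equivalent. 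Thus $\M_{f_r} = \M_{f_s}$ \emph{with equivalent norms}, and Theorem~\ref{thrm: multipliers to bilip} applies at the crossing $\{1, -1\}$.

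The computational core is to evaluate $A_{f_r}(\pm 1) = \langle f_r(\xi), f_r'(\xi)\xi \rangle$. Differentiating gives $b_r'(z) = \frac{1 - r^2}{(1 - rz)^2}$, so $b_r'(1) = \frac{1+r}{1-r}$ and $b_r'(-1) = \frac{1-r}{1+r}$, both real and positive. Writing $f_r'(z) = \tfrac{1}{\sqrt{2}}\bigl(2z,\, 2 b_r(z) b_r'(z)\bigr)$ and substituting $\xi = \pm 1$ (using $b_r(1) = 1$, $b_r(-1) = -1$) I expect
\begin{equation*}
    A_{f_r}(1) = 1 + b_r'(1) = \frac{2}{1-r}, \qquad A_{f_r}(-1) = 1 + b_r'(-1) = \frac{2}{1+r},
\end{equation*}
and hence the crossing ratio $\dfrac{A_{f_r}(1)}{A_{f_r}(-1)} = \dfrac{1+r}{1-r}$.

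Finally, Theorem~\ref{thrm: multipliers to bilip} forces $\frac{A_{f_r}(1)}{A_{f_r}(-1)} = \frac{A_{f_s}(1)}{A_{f_s}(-1)}$, i.e. $\frac{1+r}{1-r} = \frac{1+s}{1-s}$. Since $t \mapsto \frac{1+t}{1-t}$ is strictly increasing on $(-1,1)$, this yields $r = s$, contradicting $r \ne s$; therefore $\M_{f_r} \ne \M_{f_s}$. Equivalently, feeding the values $A_{f_r}(\pm 1)$, $A_{f_s}(\pm 1)$ into the formula for $\a$ in Theorem~\ref{thrm: multiplier isomorphism two points to two auto}, the condition $\a = 0$ (which is what $\M_{f_r} = \M_{f_s}$ would demand of the distinguished automorphism) reduces to $(1+r)(1-s) = (1-r)(1+s)$, again giving $r = s$. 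The only genuinely non-formal point is the passage from set-theoretic equality of the two algebras to equivalence of their multiplier norms; everything else is the derivative computation above together with the injectivity of the crossing ratio.
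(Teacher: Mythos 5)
Your proof is correct and takes essentially the same route as the paper: compute $A_{f_r}(\pm 1)=\tfrac{2}{1\mp r}$, form the crossing ratio $\tfrac{1+r}{1-r}$, and conclude from Theorem~\ref{thrm: multipliers to bilip} via strict monotonicity. The only difference is your explicit closed-graph argument upgrading set-theoretic equality of the algebras to equivalence of the multiplier norms, a point the paper leaves implicit; that addition is sound and welcome.
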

        We see that, in contrast to the injective case, even for embeddings with the same self-crossings the multiplier algebras might not coincide. However, it does not mean that the algebras are not isomorphic, as we still have the freedom to do unit disc automorphisms. Let us take care of this by considering a more general version of this embedding. Set
        \begin{equation*}
            f_{r, s}(z) = \frac{1}{\sqrt{2}} \left( b_r(z)^2, b_s(z)^2 \right), \quad r \ne s,
        \end{equation*}
        so that $f_r = f_{0, r}$.
        Note that
        \begin{equation*}
            (b_r \circ b_s)(z) = \frac{z - \frac{r + s}{1 + rs}}{1 - \frac{r + s}{1 + rs} z} = b_{\frac{r + s}{1 + rs}}(z).
        \end{equation*}
        Hence,
        \begin{equation*}
            (f_{r, s} \circ b_{-r})(z) = \frac{1}{\sqrt{2}} \left( z^2, b_{\frac{s - r}{1 - sr}}(z)^2 \right) = f_{0, \frac{s - r}{1 - sr}}(z),
        \end{equation*}
        meaning that we get the same varieties (do $z \mapsto -z$ in case $s < r$). Now, for $f_{0, r}, \, r \in (0,1)$ consider 
        \begin{equation*}
            t = \frac{-1 + \sqrt{1 - r^2}}{r},
        \end{equation*}
        we have $t \in (-1, 0)$ and $f_{0, r} \circ b_t = f_{t, -t}$. In fact, $t$ is unique, so that any $f_{r, s}$ corresponds to a unique $f_{t, -t}, \, t \in (-1, 0)$ via a unit disc automorphism. 
        For such embeddings Theorem \ref{thrm: multiplier isomorphism two points to two auto} takes a simpler form
        \begin{thrm} \label{thrm: f_r-r and f_s-s}
            $\M_{f_{r, -r}} \cong \M_{f_{s, -s}}$ algebraically if and only if  $\M_{f_{r, -r}} = \M_{f_{s, -s}}$.
        \end{thrm}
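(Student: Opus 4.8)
The plan is to deduce the theorem directly from Theorem \ref{thrm: multiplier isomorphism two points to two auto}, whose entire content is packaged in the two candidate automorphisms determined by the boundary data $A_f(\pm 1)$ and $A_g(\pm 1)$. The key observation is that the embeddings $f_{r, -r}$ are symmetric under $z \mapsto -z$, and that this symmetry forces $A_{f_{r, -r}}(1) = A_{f_{r, -r}}(-1)$, collapsing both candidate automorphisms of Theorem \ref{thrm: multiplier isomorphism two points to two auto} to the trivial maps $\id$ and $z \mapsto -z$. Once the candidates are this rigid, the remaining work is to check that neither of them moves the algebra.

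Concretely, I would first record the symmetry. Since $b_{-r}(z) = -b_r(-z)$, we have $b_{-r}(z)^2 = b_r(-z)^2$, hence
\begin{equation*}
    f_{r, -r}(z) = \frac{1}{\sqrt 2}\bigl( b_r(z)^2, b_r(-z)^2 \bigr), \qquad f_{r, -r}(-z) = \s\, f_{r, -r}(z),
\end{equation*}
where $\s$ is the coordinate-swap unitary $\s(x, y) = (y, x)$ on $\C^2$. Writing $A_f(\xi) = \langle f(\xi), f'(\xi)\xi \rangle$ and combining the chain rule with $f_{r, -r}(-z) = \s f_{r, -r}(z)$ and the unitarity of $\s$ yields $A_{f_{r, -r}}(-\xi) = A_{f_{r, -r}}(\xi)$ for all $\xi \in \TT$; in particular $A_{f_{r, -r}}(1) = A_{f_{r, -r}}(-1)$, and the same holds for $g = f_{s, -s}$.

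Plugging $A_f(1) = A_f(-1)$ and $A_g(1) = A_g(-1)$ into the explicit formulas of Theorem \ref{thrm: multiplier isomorphism two points to two auto} immediately gives $\a = \b = 0$, so that the only candidate automorphisms are $\mu(z) = z$ and $\mu(z) = -z$. For $\mu = \id$ the conclusion $\M_f = \M_g$ is exactly what we want. For $\mu(z) = -z$, I would invoke the symmetry a second time: since $g(-z) = \s g(z)$ with $\s$ unitary,
\begin{equation*}
    \k^{g \circ \mu}(z, w) = \frac{1}{1 - \langle g(-z), g(-w) \rangle} = \frac{1}{1 - \langle \s g(z), \s g(w) \rangle} = \k^g(z, w),
\end{equation*}
so $\H_{g \circ \mu} = \H_g$ with identical norms and therefore $\M_{g \circ \mu} = \M_g$. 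Thus in this branch too we get $\M_f = \M_{g \circ \mu} = \M_g$. In either case algebraic isomorphism forces $\M_{f_{r, -r}} = \M_{f_{s, -s}}$, and the converse is immediate.

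I do not expect a serious obstacle here: all the analytic difficulty is already absorbed into Theorem \ref{thrm: multiplier isomorphism two points to two auto}, and the rest is symmetry bookkeeping. The one point that genuinely deserves care is the reduction of the $\mu(z) = -z$ branch, where one must verify that precomposing $g$ with $z \mapsto -z$ leaves the reproducing kernel $\k^g$ — and hence the multiplier algebra, norm included — literally unchanged. This is precisely where the relation $g(-z) = \s g(z)$ special to the $f_{t, -t}$ family is used, and it is exactly the feature that fails for a general $f_{r, s}$, explaining why the sharper ``isomorphic $\Rightarrow$ equal'' statement holds for these symmetric embeddings but not, for instance, in Theorem \ref{thrm: f_r and f_s}.
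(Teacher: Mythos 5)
Your proposal is correct and follows essentially the same route as the paper: derive $A_{f_{r,-r}}(1)=A_{f_{r,-r}}(-1)$ from the coordinate-swap symmetry $f_{r,-r}(-z)=f_{-r,r}(z)$, conclude $\a=\b=0$ in Theorem \ref{thrm: multiplier isomorphism two points to two auto}, and dispose of the $\mu(z)=-z$ branch by observing that the swap is unitary and hence leaves the kernel $\k^{f_{s,-s}}$ unchanged. The paper phrases the swap as the identity $\k^{f_{r,-r}}=\k^{f_{-r,r}}$ rather than via an explicit unitary $\s$, but the argument is the same.
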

        The author does not know whether $\M_{f_{r, -r}} = \M_{f_{s, -s}}$ holds for any $r$ and $s$, but it seems plausible. If there are $r$ and $s$ for which $\M_{f_{r, -r}} \ne \M_{f_{s, -s}}$, then Theorem \ref{thrm: f_r-r and f_s-s} gives us an example where two embeddings with a single simple self-crossing give rise to non-isomorphic multiplier algebras. This means that whether there is only one isomorphism class of multiplier algebras remains unclear in the single simple self-crossing case.

        We finish by proving that already for the single $3$-point self-crossing case the converse to the Corollary \ref{thrm: algebras to self-crossings} does not hold. There are, in fact, quite a few isomorphism classes of multiplier algebras.
        \begin{thrm} \label{thrm: example single 3-self-crossing}
            There is a continuum of analytic discs attached to the unit sphere with the same $3$-point self-crossing and such that their multiplier algebras are all mutually non-isomorphic.  
        \end{thrm}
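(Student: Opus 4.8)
The plan is to produce the family by an explicit construction and to distinguish its members by the cyclic class of the triple $[A_f(1):A_f(\omega):A_f(\omega^2)]$, which I will show is an algebraic isomorphism invariant once the self-crossing is normalized to sit at the cube roots of unity. The first ingredient is the behaviour of $A$ under reparametrization. If $\mu$ is an automorphism of $\D$ and $h=g\circ\mu$, then starting from $A_h(\xi)=\langle h(\xi),\xi h'(\xi)\rangle$ and using that $\xi\mu'(\xi)/\mu(\xi)=|\mu'(\xi)|>0$ on $\TT$ (because $\mu$ maps the boundary to itself with positive winding), one gets the transformation rule
\[
    A_{g\circ\mu}(\xi)=|\mu'(\xi)|\,A_g(\mu(\xi)),\qquad \xi\in\TT.
\]
I would isolate this as a lemma, since it is the bridge between the reparametrization freedom of Theorem \ref{thrm: isomorphic algebras to automorphism} and the semi-invariant $A$.

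Next I set up the invariant. Composing an embedding with a disc automorphism yields an isometrically isomorphic multiplier algebra (the pullback $\H_{f\circ\mu}$ is $\H_f$ composed with $\mu$, so composition with $\mu$ maps $\M_f$ isometrically onto $\M_{f\circ\mu}$), hence I may assume every disc in the family has its single $3$-point self-crossing exactly at $1,\omega,\omega^2$. Now suppose $\M_f\cong\M_g$ algebraically. By Theorem \ref{thrm: isomorphic algebras to automorphism} there is an automorphism $\mu$ with $\M_f=\M_{g\circ\mu}$ and equivalent norms, and by Theorem \ref{thrm: algebras to crossings} the maps $f$ and $g\circ\mu$ have identical self-crossings; comparing the crossing sets forces $\mu(\{1,\omega,\omega^2\})=\{1,\omega,\omega^2\}$. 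A holomorphic automorphism preserves the cyclic order of the boundary, so $\mu$ realizes a cyclic permutation of these three points and is therefore one of the rotations $z\mapsto\omega^m z$; in particular $|\mu'|\equiv 1$ on $\TT$. Applying Corollary \ref{crl: n point A A A} to $f$ and $g\circ\mu$ and then the transformation rule with $|\mu'|\equiv 1$ yields
\[
    [A_f(1):A_f(\omega):A_f(\omega^2)]=[A_g(\omega^m):A_g(\omega^{1+m}):A_g(\omega^{2+m})],
\]
so the two triples agree up to a cyclic shift. Thus the class of $[A_f(1):A_f(\omega):A_f(\omega^2)]$ modulo the $\Z/3$ cyclic action is an algebraic isomorphism invariant.

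The technical heart is to construct a continuum of genuine embeddings, all crossing at the cube roots of unity, whose $A$-triples fill a continuum of distinct cyclic classes. To make the sphere and ball conditions automatic I take the coordinates to be weighted finite Blaschke products, $f(z)=(\gamma_1B_1(z),\dots,\gamma_nB_n(z))$ with $\sum_j|\gamma_j|^2=1$, so that $\|f(\xi)\|=1$ precisely on $\TT$. Each $B_j$ is chosen to identify the three boundary points, $B_j(1)=B_j(\omega)=B_j(\omega^2)$; such Blaschke products exist and can be taken non-symmetric, i.e.\ not functions of $z^3$, which is what lets the joint map separate the interior orbit points $\{z,\omega z,\omega^2z\}$. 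For such coordinates the boundary identity $\xi\overline{B_j(\xi)}B_j'(\xi)=|B_j'(\xi)|$ gives the clean formula
\[
    A_f(\omega^k)=\sum_{j}|\gamma_j|^2\,|B_j'(\omega^k)|,\qquad k=0,1,2,
\]
so the triple is a convex combination, governed by the weights $|\gamma_j|^2$, of the fixed triples $(|B_j'(1)|,|B_j'(\omega)|,|B_j'(\omega^2)|)$. Taking two factors whose boundary-derivative triples are not proportional and letting the weight vary over an interval traces a nondegenerate projective segment, which meets each $\Z/3$-orbit in only finitely many points and hence realizes a continuum of distinct invariant classes.

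The main obstacle is exactly the verification that these maps are honest analytic discs attached to the sphere: that the joint map is injective on $\D$, injective on $\overline{\D}$ away from the single triple, has nonvanishing derivative, and has image a multiplier variety --- uniformly for the weights in a subinterval. This is the same kind of analysis performed for the two-point family $f_{r,s}$ via \cite[Theorem 5.2]{Dav}, but here there is no reparametrization degree of freedom to exploit (three equally spaced boundary points cannot be carried to an equally spaced triple by a non-rotation Möbius map, as one checks from the vanishing of the first two elementary symmetric functions), so the freedom must come entirely from the Blaschke data and injectivity is not automatic. I expect to secure injectivity on an open set of parameters by a perturbation/continuity argument around one explicitly verified embedding and then pass to a sub-continuum. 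Granting the construction, the invariance established above shows at once that the members are pairwise non-isomorphic, which completes the proof.
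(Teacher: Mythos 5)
Your invariance argument is essentially the paper's: the same transformation rule for $A$ under automorphisms (your $|\mu'(\xi)|$ equals the paper's $\frac{1-|\a|^2}{|\a-\xi|^2}$ on $\TT$), the same observation that an automorphism preserving $\{1,\o,\o^2\}$ must be a rotation by a cube root of unity, and the same use of Corollary \ref{crl: n point A A A} to extract the cyclic class of $[A_f(1):A_f(\o):A_f(\o^2)]$ as an invariant. The only stylistic difference on this half is that you keep the full cyclic class and argue a nondegenerate projective segment meets each $\Z/3$-orbit finitely often, whereas the paper imposes the symmetry $f(\bar z)=\overline{f(z)}$ so that $A_f(\o)=A_f(\o^2)$, kills the cyclic ambiguity outright, and only needs the single ratio $A_{f_\a}(1)/A_{f_\a}(\o)\to\infty$. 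Both work.

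The genuine gap is in the existence half, which you yourself flag as ``the main obstacle'' and then do not resolve. You never exhibit a single verified embedding, and the proposed perturbation argument is not carried out; it is also not routine, because under perturbation one must exclude new boundary self-crossings appearing near (or away from) the prescribed triple, and injectivity on $\overline\D$ minus a finite set is not an open condition without a quantitative transversality input. Moreover, your remark that non-symmetric Blaschke coordinates ``separate the interior orbit points $\{z,\o z,\o^2 z\}$'' only addresses the relevant failure mode if some coordinate (such as $z^3$) already pins $z$ down to its $\Z/3$-orbit; for a general tuple of weighted Blaschke products identifying $1,\o,\o^2$ there is no reason injectivity failures are confined to orbits. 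The paper closes exactly this hole: it includes $z^3$ as a coordinate so that any collision forces $w=\o^{\pm1}z$, observes that $g_{\a_0}(z)=g_{\a_0}(\o z)$ has only finitely many solutions in $\D\setminus\{0\}$ (analyticity in a larger disc), and then chooses $\a_1$ outside a finite exceptional set so that $g_{\a_1}$ separates those finitely many orbits; the variety property then comes for free from analyticity on a larger disc. Without an argument of this kind (or a completed perturbation argument with an explicit base point), your construction does not yet produce the continuum of analytic discs the theorem asserts.
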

        For the example appearing in this theorem we rely on Theorem \ref{thrm: multipliers to bilip} to prove that two multiplier algebras are non-isomorphic. It, hence, remains an open question whether the converse to Theorem \ref{thrm: multipliers to bilip} is true: If the embedding maps have the same self-crossing type and an equivalent derivative behaviour does that imply that the multiplier algebras are isomorphic? 
    \subsection{Organization of the paper}
        In Section \ref{sec: Analytic Discs} we prove the results stated in Subsection \ref{subsec: Analytic Discs}: First, we study the spaces $\H_f$ and $\M_f$ in general in Subsection \ref{subsec: Hf and Mf}. Then, we prove Theorems \ref{thrm: algebras to crossings} and \ref{thrm: isomorphic algebras to automorphism} in Subsections \ref{subsec: Theorem 1} and \ref{subsec: Theorem 2} respectively.

        Section \ref{sec: A and examples} is entirely devoted to the proofs of the results stated in Subsection \ref{subsec: A and examples}. We prove Theorems \ref{thrm: multipliers to bilip}, \ref{thrm: multiplier isomorphism two points to two auto}, \ref{thrm: f_r and f_s}, \ref{thrm: f_r-r and f_s-s} and \ref{thrm: example single 3-self-crossing} in Subsections \ref{subsec: Theorem 3}, \ref{subsec: Theorem 4}, \ref{subsec: Theorem 5}, \ref{subsec: Theorem 6} and \ref{subsec: Theorem 7} respectively.
        
\section{Analytic Discs} \label{sec: Analytic Discs}
    \subsection{Properties of functions in \texorpdfstring{$\H_f$}{Hf} and \texorpdfstring{$\M_f$}{Mf}} \label{subsec: Hf and Mf}
    We start by studying some necessary properties which must be satisfied by functions in the Hilbert spaces $\H_f$ and the multiplier algebras $\M_f$.
    \begin{thrm} \label{thrm: 1 = -1}
        Suppose $V$ is an analytic disc attached to the unit sphere, and $f$ is its embedding map such that $f(-1) = f(1)$. Let $h$ be a function in $\H_f$ such that the following limits exist
        \begin{equation*}
            h(1) = \lim_{r \to 1-} h(r), \qquad h(-1) = \lim_{r \to 1-} h(-r).
        \end{equation*}
        Then $h(1) = h(-1)$.
    \end{thrm}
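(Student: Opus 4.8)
The plan is to pull the problem back to the boundary behaviour of a single holomorphic function on $\B_d$ and then to invoke a Lindelöf-type principle. Since $\H_f$ is by definition the image of $\H_V \subseteq H^2_d$ under composition with $f$, every $h \in \H_f$ has the form $h = F \circ f$ for some $F \in H^2_d$, holomorphic on $\B_d$. Writing $p = f(1) = f(-1) \in \partial \B_d$, the two radial limits become $h(1) = \lim_{r \to 1^-} F(f(r))$ and $h(-1) = \lim_{r \to 1^-} F(f(-r))$: the limits of one and the same holomorphic function $F$ along the two curves $r \mapsto f(r)$ and $r \mapsto f(-r)$, both terminating at the single boundary point $p$. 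So the content of the statement is that $F$ cannot have a genuine jump across the self-crossing.

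Next I would verify that both curves approach $p$ admissibly, i.e.\ inside a Korányi (nontangential) region. Using the $C^2$-extension from Definition \ref{def: analytic disc} and the first-order expansions $f(r) = p - f'(1)(1-r) + O((1-r)^2)$ and $f(-r) = p + f'(-1)(1-r) + O((1-r)^2)$, together with $A_f(\pm 1) = \langle f(\pm 1), \pm f'(\pm 1) \rangle > 0$, one finds $1 - \langle f(\pm r), p \rangle \sim A_f(\pm 1)(1-r)$ and $1 - \|f(\pm r)\|^2 \sim 2 A_f(\pm 1)(1-r)$. Hence the Julia quotients $|1 - \langle f(\pm r), p \rangle|/(1 - \|f(\pm r)\|^2)$ tend to $\tfrac12$, so both curves lie in a fixed Korányi region at $p$. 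A Lindelöf/Čirka-type theorem then asserts that a holomorphic function with a limit along one admissible curve to $p$ has the same admissible ($K$-)limit along every admissible approach; applying it to $F$ forces $h(1)$ and $h(-1)$ to coincide with the common $K$-limit of $F$ at $p$, hence with each other.

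The main obstacle is that $F$ is only assumed to lie in $H^2_d$, so it need not be bounded, whereas the classical Lindelöf--Čirka theorem is stated for $F \in H^\infty(\B_d)$; moreover $F$ is a priori controlled only on the thin set $V = f(\D)$, not on a full neighbourhood of $p$ in $\B_d$. A genuinely several-variable difficulty compounds this: the approach directions $f'(1)$ and $f'(-1)$ are in general complex-linearly independent (as one checks already for $f_r$ with $r \neq 0$), so there is no single complex geodesic through $p$ containing both curves and no naive reduction to one variable — the Korányi framework is essential. To circumvent the unboundedness I would exploit the reproducing-kernel growth bound $|h(z)| = |\langle h, \k^f_z \rangle| \le \|h\|_{\H_f}\, (1 - \|f(z)\|^2)^{-1/2} \lesssim (1-|z|)^{-1/2}$, valid in any Stolz angle at $\pm 1$, to place $F$ in a slow-growth class along the relevant approach regions for which the Lindelöf principle still applies. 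An alternative is to prove the statement first for bounded multipliers $\f = \Phi \circ f$ with $\Phi \in \M_d \subset H^\infty(\B_d)$, where Čirka's theorem applies verbatim, and then to upgrade to all of $\H_f$ via the same growth estimate. Making this passage — from bounded functions on a neighbourhood to unbounded $H^2_d$-functions controlled only on $V$ — rigorous is where the real work lies; everything else is bookkeeping on the first-order geometry of $f$ at $\pm 1$. It is worth noting that the Hilbert-space geometry alone cannot yield the conclusion: a direct computation gives $\langle \k^f_r/\|\k^f_r\|,\, \k^f_{-r}/\|\k^f_{-r}\| \rangle \to 2\sqrt{A_f(1)A_f(-1)}/(A_f(1)+A_f(-1))$, which is strictly less than $1$ unless $A_f(1)=A_f(-1)$, so the evaluation functional $h \mapsto h(1)-h(-1)$ is unbounded and the equality must come from complex analysis rather than from the norm.
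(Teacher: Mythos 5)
Your reduction to a Lindel\"of--\v{C}irka principle is an attractive idea, and your verification that both curves $r \mapsto f(\pm r)$ are restricted (special) approaches to $p = f(1) = f(-1)$ is correct, but the proposal stalls exactly where you say ``the real work lies,'' and that work is not routine. \v{C}irka's theorem is a statement about $H^\infty(\B_d)$ (or at least about functions normal on a Kor\'anyi region), and a function $F \in H^2_d$ satisfies only $|F(z)| \le \|F\|(1-\|z\|^2)^{-1/2}$; no version of the several-variable Lindel\"of principle for this growth class is cited or proved, and a pure growth bound of this type does not by itself give normality even in one variable. Your fallback --- prove the claim for multipliers $\Phi \circ f$ with $\Phi \in \M_d \subset H^\infty(\B_d)$ and then ``upgrade'' to $\H_f$ --- goes in the wrong direction: passing from boundary coincidence of multipliers to boundary coincidence of arbitrary $h \in \H_f$ would require representing $h$ as a quotient of multipliers with denominator having a nonzero radial limit at $\pm 1$, which is not available. (The paper does the opposite: it proves the statement for $\H_f$ first and then deduces the multiplier version by multiplying by $\k^f_0$.) As written, the proof has a genuine hole at its central step.

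More importantly, your closing remark --- that $\langle \k^f_r/\|\k^f_r\|,\, \k^f_{-r}/\|\k^f_{-r}\|\rangle \to 2\sqrt{A_f(1)A_f(-1)}/(A_f(1)+A_f(-1)) < 1$ shows the functional $h \mapsto h(1)-h(-1)$ is unbounded, so that ``Hilbert-space geometry alone cannot yield the conclusion'' --- is incorrect, and it dismisses precisely the argument the paper uses. Your computation only shows that the particular net $\k^f_r - \k^f_{-r}$ (symmetric rates of approach) is unbounded in norm; it does not show that the limit functional on the dense subspace of functions with radial limits is unbounded. The paper approaches $1$ and $-1$ at the calibrated rates $z = 1 - t/A$, $w = -1 + t/B$ with $A = A_f(1)$, $B = A_f(-1)$; at these rates the leading singularities $\tfrac{1}{2Ax} + \tfrac{1}{2By} - \tfrac{2}{Ax+By}$ cancel exactly (whereas for $x=y$ they leave a term proportional to $(A-B)^2/(1-r)$, which is what your computation detects), and the $2\times 2$ positivity condition $\k^f(z,w) - h(z)\overline{h(w)} \ge 0$ then yields a uniform bound $|h(1)-h(-1)|^2 \le c\,\|h\|^2_{\H_f}$. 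So the functional \emph{is} bounded, is represented by some $\f \in \H_f$, and $\f = 0$ because $\k^f_z(1) = \k^f_z(-1)$ for every $z \in \D$ (a consequence of $f(1)=f(-1)$). The entire proof is Hilbert-space geometry; no complex-analytic boundary theory is needed.
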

    In particular, if $h \in \H_f$ extends continuously up to $\overline \D$, then $h(1) = h(-1)$.

    \begin{proof}
        By appropriately scaling $h$ we can assume $||h||_{\H_f} \le 1$. Since $\H_f$ is an RKHS with kernel $\k^f$ we can use \cite[Theorem 3.11]{paulsen2016introduction} to describe the functions in $\H_f$ in terms of the kernel:
        \begin{equation*}
            ||h||_{\H_f} \le 1 \iff \left( \k^f(z,w) - h(z) \overline{h(w)} \right)_{z, w \in \D} \ge 0.       
        \end{equation*}  
        In particular, using \eqref{eq: k^f formula}, for two points $z, w \in \D$ we obtain
        \begin{equation*}
            \begin{pmatrix}
                \frac{1}{1 - || f(z) ||^2} - |h(z)|^2 & \frac{1}{1 - \langle f(z), f(w) \rangle} - h(z) \overline{h(w)} \\
                \frac{1}{1 - \langle f(w), f(z) \rangle} - h(w) \overline{h(z)} & \frac{1}{1 - || f(w) ||^2} - |h(w)|^2
            \end{pmatrix}
            \ge 0.
        \end{equation*}
        As this condition implies that the determinant is positive, we have:
        \begin{align} 
            \left( \frac{1}{1 - || f(z) ||^2} - |h(z)|^2 \right) \left( \frac{1}{1 - || f(w) ||^2} - |h(w)|^2 \right) - \nonumber \\
            \label{eq: det>0}
            \left| \frac{1}{1 - \langle f(z), f(w) \rangle} - h(z) \overline{h(w)} \right|^2 \ge 0.
        \end{align}
    Now let us take $z = 1 - x, w = -1 + y$ for $x, y > 0$. Expanding $f$ around $1$ and $-1$ we get
    \begin{align*}
        f(1 - x) &= f(1) - f'(1)x + \frac{f''(1)}{2} x^2 + o(x^2), \\ 
        f(-1 + y) &= f(-1) + f'(-1)y + \frac{f''(-1)}{2} y^2 + o(y^2).
    \end{align*}
    Thus, 
    \begin{align} 
        & 1 - ||f(1 - x)||^2 = 
        2 \Re \langle f(1), f'(1) \rangle x - (||f'(1)||^2 + \Re \langle f(1), f''(1) \rangle)x^2 + o(x^2). \label{eq: 11.1}
    \end{align}
    Similarly,
    \begin{align} 
        & 1 - ||f(-1 + y)||^2 = 
        -2 \Re \langle f(-1), f'(-1) \rangle y -  (||f'(-1)||^2 + \Re \langle f(-1), f''(-1) \rangle)y^2 + o(y^2). \label{eq: -1-1.1}
    \end{align}
    Note that by \cite[Proposition 3.1, Corollary 3.2]{Dav}, $A = \langle f(1), f'(1) \rangle > 0$ and $B = -\langle f(-1), f'(-1) \rangle > 0$. Let us set
    \begin{align*}
        C &= ||f'(1)||^2, \\
        2F &= \langle f(1), f''(1) \rangle, \\
        D &= ||f'(-1)||^2, \\
        2G &= \langle f(-1), f''(-1) \rangle.   
    \end{align*}
    We get
    \begin{align}
        1 - ||f(1 - x)||^2 &=  2Ax - (C + 2 \Re F)x^2 + o(x^2), \label{eq: 11.2} \\
        1 - ||f(-1 + y)||^2 &=  2By - (D + 2 \Re G)y^2 + o(y^2). \label{eq: -1-1.2}
    \end{align}
    Finally, noting that $f(1) = f(-1)$ and setting $E = \langle f'(1), f'(-1) \rangle$, we get
    \begin{equation} \label{eq: 1-1} 
        1 - \langle f(1-x), f(-1 + y) \rangle = Ax + By + E xy - \bar Fx^2 - Gy^2 + o(x^2) + o(y^2).  
    \end{equation}
    Expanding the brackets in \eqref{eq: det>0} we get
    \begin{align}
        \frac{1}{1 - || f(z) ||^2} \frac{1}{1 - || f(w) ||^2} - \left| \frac{1}{1 - \langle f(z), f(w) \rangle} \right|^2 \ge \label{eq: det:LHS} \\
        \frac{|h(w)|^2}{1 - || f(z) ||^2} + \frac{|h(z)|^2}{1 - || f(w) ||^2} - 2 \Re \left( \frac{\overline{h(z)} h(w)}{1 - \langle f(z), f(w) \rangle} \right) \label{eq: det:RHS}
    \end{align}
    To annihilate the highest order term in \eqref{eq: det:LHS} we set $Ax = By = t > 0$. Hence,
    \begin{align}
        \frac{1}{1 - || f(z) ||^2} = 
        \frac{1}{2t - \frac{C + 2 \Re F}{A^2}t^2 + o(t^2)} = \nonumber \\
        \frac{1}{2t} \left(1 + \frac{C + 2 \Re F}{2A^2}t + o(t) \right). \label{eq: 11.3}
    \end{align}
    Similarly,
    \begin{align}
        \frac{1}{1 - || f(w) ||^2} = 
        \frac{1}{2t - \frac{D + 2 \Re G}{B^2}t^2 + o(t^2)} = \nonumber \\
        \frac{1}{2t} \left( 1 + \frac{D + 2 \Re G}{2B^2}t + o(t) \right). \label{eq: -1-1.3}
    \end{align}
    Finally,
    \begin{align} 
        \frac{1}{1 - \langle f(z), f(w) \rangle} = 
        \frac{1}{2t + \left( \frac{E}{AB} - \frac{\bar F}{A^2} - \frac{G}{B^2} \right) t^2 + o(t^2)} = \nonumber \\
        \frac{1}{2t} \left(1 + \left( \frac{\bar F}{2A^2} + \frac{G}{2B^2} - \frac{E}{2AB} \right) t + o(t) \right). \label{eq: 1-1.2} 
    \end{align}
    Hence,
    \begin{align} 
        \left| \frac{1}{1 - \langle f(z), f(w) \rangle} \right|^2 = 
        \frac{1}{4t^2} \left( 1 + \left( \frac{\Re F}{A^2} + \frac{\Re G}{B^2} - \frac{\Re E}{AB} \right) t + o(t) \right). \label{eq: 1-1.3}   
    \end{align}
    Substituting \eqref{eq: 11.3}, \eqref{eq: -1-1.3} and \eqref{eq: 1-1.3} into \eqref{eq: det:LHS} we get
    \begin{align}
        \frac{1}{4t^2} \Biggl( \left(1 + \frac{C + 2 \Re F}{2A^2}t + o(t) \right) \left( 1 + \frac{D + 2 \Re G}{2B^2}t + o(t) \right) - \nonumber \\
        \left( 1 + \left( \frac{\Re F}{A^2} + \frac{\Re G}{B^2} - \frac{\Re E}{AB} \right) t + o(t) \right) \Biggr) = \nonumber \\  
        \frac{1}{4t} \left( \frac{C}{2A^2} + \frac{\Re F}{A^2} + \frac{D}{2B^2} + \frac{\Re G}{B^2} - \frac{\Re F}{A^2} - \frac{\Re G}{B^2} + \frac{\Re E}{AB} + o(1) \right) = \nonumber \\
        \frac{1}{8t} \left( \frac{C}{A^2} + \frac{D}{B^2} + \frac{2 \Re E}{AB} \right) + o \left( \frac 1 t \right). \label{eq: det:LHS.2}
    \end{align}
    Similarly, expanding only up to $o(1)$ in \eqref{eq: 11.3}, \eqref{eq: -1-1.3}, \eqref{eq: 1-1.2} and substituting into \eqref{eq: det:RHS} we get 
    \begin{align}
        \frac{1}{2t} |h(w)|^2 (1 + o(1)) + \frac{1}{2t} |h(z)|^2 (1 + o(1)) - \frac{1}{2t} 2 \Re (\overline{h(z)} h(w)) (1 + o(1)) = \nonumber \\
        \frac{1}{2t} \left( |h(z)|^2 + |h(w)|^2 - 2 \Re (\overline{h(z)} h(w)) \right) + o \left( \frac 1 t \right) = 
        \nonumber \\ 
        \frac{1}{2t} |h(z) - h(w)|^2 + o \left( \frac 1 t \right). \label{eq: det:RHS.2}
    \end{align}
    Thus, replacing \eqref{eq: det:LHS} with \eqref{eq: det:LHS.2} and \eqref{eq: det:RHS} with \eqref{eq: det:RHS.2} we get
    \begin{equation*}
        \frac{1}{8t} \left( \frac{C}{A^2} + \frac{D}{B^2} + \frac{2 \Re E}{AB} \right) + o \left( \frac 1 t \right) \ge \frac{1}{2t} |h(z) - h(w)|^2 + o \left( \frac 1 t \right).  
    \end{equation*}
    This is equivalent to
    \begin{equation*}
        |h(z) - h(w)|^2 \le \frac{1}{4} \left( \frac{C}{A^2} + \frac{D}{B^2} + \frac{2 \Re E}{AB} \right) + o(1).    
    \end{equation*}
    Explicitly writing $z = 1 - t/A$ and $w = - 1 + t/B$ we get 
    \begin{equation} \label{eq: k - k}
         \left|h \left( 1 - \frac{t}{A} \right) - h \left( -1 + \frac{t}{B} \right) \right|^2 \le \frac{1}{4} \left( \frac{C}{A^2} + \frac{D}{B^2} + \frac{2 \Re E}{AB} \right) + o(1), \quad t > 0.
    \end{equation}
    In particular, taking the limit as $t \to 0$ for $h$ that have radial limits at $\pm 1$ with $|| h ||_{\H_f} \le 1$ we get
    \begin{equation*}
        \left|h (1) - h \left( -1  \right) \right|^2 \le \frac{1}{4} \left( \frac{C}{A^2} + \frac{D}{B^2} + \frac{2 \Re E}{AB} \right).
    \end{equation*}
    It follows by scaling that in general we have
    \begin{equation*}
        \left|h (1) - h \left( -1  \right) \right|^2 \le \frac{1}{4} \left( \frac{C}{A^2} + \frac{D}{B^2} + \frac{2 \Re E}{AB} \right) || h ||^2_{\H_f},
    \end{equation*}
    for all $h \in \H_f$ with radial limits at $\pm 1$. Note that such functions are dense in $\H_f$, since, for example, all $\k^f_z, \, z \in \D$, \eqref{eq: k^f formula}, are continuous in $\overline \D$. 

    We conclude that there is a bounded functional on $\H_f$, let us denote the respective function by $\f \in \H_f$, such that $\langle h, \f \rangle_{\H_f} = h(1) - h(-1)$ for $h \in \H_f$ that have the limits 
    \begin{equation*}
        h(1) = \lim_{x \to 0+} h(1 - x), \qquad h(-1) = \lim_{y \to 0+} h(-1 + y).
    \end{equation*}
    To finish the proof of the theorem it remains to show that $\f = 0$. Indeed, for any $z \in \D$
    \begin{equation*}
        \f (z) = \langle \f, \k^f_z \rangle_{\H_f} = \overline{\langle \k^f_z, \f \rangle_{\H_f}} = \overline{\k^f_z(1) - \k^f_z(-1)} = 0.
    \end{equation*}
    \end{proof}

    \begin{crl} \label{crl: k - k w 0}
        In the setting of Theorem \ref{thrm: 1 = -1} we have
        \begin{equation*}
            \k^f_{1 - \frac t A} - \k^f_{-1 + \frac t B} \to 0, \quad t \to 0.
        \end{equation*}
        in the $w^*$ topology.
    \end{crl}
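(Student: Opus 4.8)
The plan is to recognize the displayed difference of kernels as the Riesz representative of the functional $h \mapsto h(1 - t/A) - h(-1 + t/B)$ on $\H_f$, and then to combine a uniform norm bound with pointwise convergence on a dense set. Set $z = 1 - t/A$, $w = -1 + t/B$ and $\psi_t = \k^f_z - \k^f_w \in \H_f$. By the reproducing property, $\langle h, \psi_t \rangle_{\H_f} = h(z) - h(w)$ for every $h \in \H_f$, so, since $\H_f$ is a Hilbert space and its weak and $w^*$ topologies coincide, the assertion $\psi_t \to 0$ in the $w^*$ topology is exactly the statement that $h(z) - h(w) \to 0$ for every $h \in \H_f$.

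First I would establish that $\{\psi_t\}$ is norm-bounded as $t \to 0$. Expanding
\[
\|\psi_t\|_{\H_f}^2 = \k^f(z,z) + \k^f(w,w) - 2\Re \k^f(z,w)
\]
and substituting the asymptotics \eqref{eq: 11.3}, \eqref{eq: -1-1.3} and \eqref{eq: 1-1.2} for the three terms, the leading $\tfrac{1}{2t} + \tfrac{1}{2t} - \tfrac{1}{t}$ singularities cancel and the constant terms collect to
\[
\|\psi_t\|_{\H_f}^2 = \tfrac14\left( \tfrac{C}{A^2} + \tfrac{D}{B^2} + \tfrac{2\Re E}{AB} \right) + o(1), \qquad t \to 0.
\]
In particular $\sup_{0 < t < t_0}\|\psi_t\|_{\H_f} < \infty$ for some $t_0 > 0$. (Alternatively this bound can be read off directly from \eqref{eq: k - k}.)

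Next I would invoke Theorem \ref{thrm: 1 = -1} on the dense subset of $\H_f$ consisting of functions admitting radial limits at $\pm 1$; as noted in that proof, this subset is dense because every $\k^f_\zeta$ is continuous on $\overline{\D}$. For such $h$, since $z \to 1$ and $w \to -1$ radially, $\langle h, \psi_t\rangle_{\H_f} = h(z) - h(w) \to h(1) - h(-1) = 0$, the last equality being the conclusion of Theorem \ref{thrm: 1 = -1}. Finally I would upgrade this to all of $\H_f$ by a standard approximation: given $h \in \H_f$ and $\eps > 0$, pick $h_0$ with radial limits at $\pm 1$ and $\|h - h_0\|_{\H_f} < \eps$, and bound $|\langle h, \psi_t\rangle_{\H_f}| \le |\langle h_0, \psi_t\rangle_{\H_f}| + \|h - h_0\|_{\H_f}\,\|\psi_t\|_{\H_f}$; the first term vanishes as $t \to 0$ while the second is controlled by the uniform norm bound, so $\limsup_{t\to 0}|\langle h, \psi_t\rangle_{\H_f}|$ is $O(\eps)$, hence $0$.

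There is no serious obstacle here: the whole content is the interplay between the norm bound and the dense pointwise convergence, and both ingredients are already present in the proof of Theorem \ref{thrm: 1 = -1} --- the norm bound is essentially the left-hand computation \eqref{eq: det:LHS.2} read off for the kernel functions themselves, while the pointwise vanishing is the theorem's conclusion. The only point demanding minor care is making the norm bound genuinely uniform in $t$ (rather than a merely $h$-dependent estimate), which the direct computation of $\|\psi_t\|_{\H_f}^2$ above settles cleanly.
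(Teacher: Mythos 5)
Your argument is correct and follows essentially the same route as the paper: a uniform norm bound on $\k^f_{1-t/A}-\k^f_{-1+t/B}$ (which the paper reads off from \eqref{eq: k - k} by duality, exactly as your parenthetical remark suggests) combined with pointwise vanishing of the pairing on a dense subset, via Theorem \ref{thrm: 1 = -1}. The only cosmetic difference is that the paper uses linear combinations of kernel functions as the dense set, whereas you use functions with radial limits at $\pm 1$; both work, and your explicit computation of $\|\psi_t\|^2$ is a correct (if optional) way to make the norm bound fully transparent.
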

    \begin{proof}
        From \eqref{eq: k - k} it follows that 
        \begin{equation*}
            \k^f_{1 - \frac t A} - \k^f_{-1 + \frac t B}, \quad t \to 0.
        \end{equation*}
        is bounded in norm. Hence, since linear combinations of $\k^f_z, \ z \in \D$ are dense in $\H_f$, the conclusion follows from Theorem \ref{thrm: 1 = -1}. 
    \end{proof}

    An immediate corollary of Theorem \ref{thrm: 1 = -1} is the following
    \begin{thrm} \label{crl: xi = zet}
        Suppose $V$ is an analytic disc attached to the unit sphere, and $f$ is its embedding map. Let $f(\xi) = f(\z)$ for two distinct $\xi, \z \in \mathbb{T}$. If $h$ is a function in $\H_f$ that is continuous up to $\overline \D$, then $h(\xi) = h(\z)$.
    \end{thrm}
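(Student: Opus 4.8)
The plan is to reduce the statement to Theorem \ref{thrm: 1 = -1} by transporting the self-crossing $\{\xi, \z\}$ to the distinguished pair $\{1, -1\}$ through a change of variable on the disc. First I would produce an automorphism $\mu$ of $\D$ with $\mu(1) = \xi$ and $\mu(-1) = \z$. Such a $\mu$ exists because the automorphism group of $\D$ acts transitively on ordered pairs of distinct boundary points: the group is $3$-dimensional, the stabilizer of $(1,-1)$ (the hyperbolic flow with fixed points $\pm 1$) is $1$-dimensional, so the orbit of $(1,-1)$ is a $2$-dimensional open subset of the connected $2$-dimensional manifold of ordered pairs of distinct points of $\TT$, hence all of it. Concretely one can also write $\mu$ down via the Cayley transform to the upper half-plane, where it becomes a real fractional-linear map, respecting the orientation of $\TT$. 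I would record that $\mu$ is a biholomorphism of $\D$ extending to an analytic diffeomorphism of $\overline{\D}$, in particular $C^2$ up to the boundary with nonvanishing derivative.

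Next I would verify that $\tilde f := f \circ \mu$ is again an embedding map, now with a self-crossing at $\pm 1$. Indeed $\tilde f(\D) = f(\D) = V$ is the same variety, $\tilde f$ is injective and analytic with $\tilde f'(z) = f'(\mu(z))\mu'(z) \ne 0$, it extends to $C^2$ on $\overline{\D}$, and $\|\tilde f(x)\| = 1 \iff |\mu(x)| = 1 \iff |x| = 1$; moreover $\tilde f(1) = f(\xi) = f(\z) = \tilde f(-1)$. Thus $\tilde f$ satisfies Definition \ref{def: analytic disc} and the hypothesis of Theorem \ref{thrm: 1 = -1}.

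The key structural point is that composition with $\mu$ intertwines the two pulled-back spaces. Unwinding the definition, $k \in \H_{\tilde f}$ iff $k \circ \tilde f^{-1} = (k \circ \mu^{-1}) \circ f^{-1} \in \H_V$, i.e. iff $k \circ \mu^{-1} \in \H_f$; since then $k \circ \tilde f^{-1} = h \circ f^{-1}$ for $k = h \circ \mu$, the map $h \mapsto h \circ \mu$ is in fact an isometric bijection from $\H_f$ onto $\H_{\tilde f}$. Therefore, given $h \in \H_f$ continuous up to $\overline{\D}$, the function $k := h \circ \mu$ lies in $\H_{\tilde f}$ and, as a composition of maps continuous on $\overline{\D}$, is itself continuous up to $\overline{\D}$. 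Applying Theorem \ref{thrm: 1 = -1} (in the form of the remark following it) to $\tilde f$ and $k$ gives $k(1) = k(-1)$, that is $h(\xi) = h(\z)$, which is the claim.

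The argument is essentially a reduction, so there is no serious analytic obstacle; the only points requiring genuine care are the existence of the automorphism $\mu$ sending $(1,-1)$ to $(\xi,\z)$, where one must respect the orientation of $\TT$, and the routine check that pulling back by $\mu$ preserves both membership in $\H_f$ and continuity up to the boundary.
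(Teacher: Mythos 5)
Your proposal is correct and is essentially identical to the paper's proof: both reduce to Theorem \ref{thrm: 1 = -1} by precomposing with a disc automorphism carrying $\{\pm 1\}$ to $\{\xi,\z\}$ and using that $h\mapsto h\circ\mu$ is an isomorphism of $\H_f$ onto $\H_{f\circ\mu}$ preserving continuity up to $\overline{\D}$. The extra care you take about orientation is harmless but unnecessary, since the automorphism group acts transitively on ordered pairs of distinct boundary points with no orientation obstruction.
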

    \begin{proof}
        Consider a disc automorphism $\mu$ such that $\mu(-1) = \xi, \, \mu(1) = \z$. Then $g = f \circ \mu$ satisfies the conditions of Theorem \ref{thrm: 1 = -1}. Note that $h \circ \mu$ belongs to $\H_g$ ($\H_f \cong \H_g$ as RKHS by $\circ \mu$). $h \circ \mu$ is continuous up to $\overline \D$ as well, so it satisfies the conditions of Theorem \ref{thrm: 1 = -1}. We conclude $(h \circ \mu)(-1) = (h \circ \mu) (1)$, which is exactly $h(\xi) = h(\z)$.
    \end{proof} 

    From this result we can infer that the same must hold for multipliers.
    \begin{thrm} \label{thrm: crossings to multipliers}
        Suppose $V$ is an analytic disc attached to the unit sphere, and $f$ is its embedding map. Let $f(\xi) = f(\z)$ for two distinct $\xi, \z \in \mathbb{T}$. If $\f$ is a function in $\M_f$ that is continuous up to $\overline \D$, then $\f(\xi) = \f(\z)$.
    \end{thrm}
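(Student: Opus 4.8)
The plan is to reduce the statement for multipliers to the already-established statement for functions in $\H_f$, namely Theorem \ref{crl: xi = zet}. The idea is that a multiplier $\f$ continuous up to $\overline\D$ can be detected by letting it act on a conveniently chosen element of $\H_f$ that is itself continuous up to the boundary and that does not vanish at the crossing points $\xi, \z$.

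First I would fix a point $w \in \D$ and consider the kernel function $\k^f_w \in \H_f$. By \eqref{eq: k^f formula} it is continuous up to $\overline\D$, and since $f(\xi) = f(\z)$ we have $\k^f_w(\xi) = \frac{1}{1 - \langle f(\xi), f(w) \rangle} = \frac{1}{1 - \langle f(\z), f(w) \rangle} = \k^f_w(\z)$, with this common value nonzero. (One could equally invoke Theorem \ref{crl: xi = zet} directly to get $\k^f_w(\xi) = \k^f_w(\z)$, but here the equality is already visible from the explicit formula.)

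Next, since $\f \in \M_f$, multiplication by $\f$ maps $\H_f$ into itself, so $\f \cdot \k^f_w \in \H_f$. As $\f$ and $\k^f_w$ are both continuous up to $\overline\D$, so is their product. Theorem \ref{crl: xi = zet} then applies to $\f \cdot \k^f_w$ and yields $(\f \k^f_w)(\xi) = (\f \k^f_w)(\z)$, that is, $\f(\xi) \k^f_w(\xi) = \f(\z) \k^f_w(\z)$. Dividing by the common nonzero value $\k^f_w(\xi) = \k^f_w(\z)$ gives $\f(\xi) = \f(\z)$, which is the desired conclusion.

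There is essentially no serious obstacle here: the argument is a short bootstrap from the Hilbert-space statement to the multiplier statement. The only point requiring care is to have, at the crossing, a boundary-continuous element of $\H_f$ that does not vanish there, and the reproducing kernel $\k^f_w$ supplies exactly such a function for free.
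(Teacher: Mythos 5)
Your proposal is correct and is essentially identical to the paper's own proof: the paper uses $h = \k^f_0$ (the case $w = 0$ of your argument), observes that $\f h \in \H_f$ is continuous up to $\overline{\D}$, applies the Hilbert-space result to get $\f(\xi) h(\xi) = \f(\z) h(\z)$, and divides by the common nonzero value. No gaps.
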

    \begin{proof}
        Consider $h(z) = \k_{0}^f(z)$. By definition $h(\xi) = h(\z) \ne 0$. Note that $\f h \in \H_f$ is continuous up to $\overline{\D}$, so that $\f(\xi) h(\xi) = \f(\z) h(\z)$. Dividing by $h(\xi) = h(\z) \ne 0$ we get $\f(\xi) = \f(\z)$.
    \end{proof}

    \subsection{Proof of Theorem \ref{thrm: algebras to crossings}} \label{subsec: Theorem 1}
    \begin{thrmrestate}
            Suppose $V$, $W$ are analytic discs attached to the unit sphere, and $f$, $g$ are the respective embedding maps. If $\M_f = \M_g$, then
            \begin{equation*}
                f(\xi) = f(\zeta) \iff g(\xi) = g(\zeta), \quad \xi, \zeta \in \TT. 
            \end{equation*}
            This means that $f$ and $g$ have the same self-crossings on the boundary.        
    \end{thrmrestate}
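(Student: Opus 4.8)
The plan is to read the statement off Theorem \ref{thrm: crossings to multipliers} by producing enough continuous multipliers to separate boundary points. First I would record the symmetry of the situation: the claim is a biconditional that is symmetric under interchanging $f$ and $g$ (the hypothesis $\M_f = \M_g$ is itself symmetric), so it suffices to prove the single implication $f(\xi) = f(\z) \Rightarrow g(\xi) = g(\z)$ and then swap roles for the converse. One may also assume $\xi \ne \z$, since otherwise both equalities hold trivially, and this is exactly the hypothesis ``two distinct $\xi, \z \in \TT$'' demanded by Theorem \ref{thrm: crossings to multipliers}.

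The key observation is that the component functions of the embedding map $g = (g_1, \dots, g_d)$ are themselves continuous multipliers in $\M_g$. Indeed, the coordinate functions $z_i$ lie in $\M_d = \Mult(H^2_d)$, hence their restrictions $z_i|_W$ lie in $\M_W$; pulling back along $g$ gives $g_i = (z_i|_W) \circ g \in \M_g$ for each $i$. Since by Definition \ref{def: analytic disc} the map $g$ extends to a $C^2$ map on $\overline{\D}$, each $g_i$ is in particular continuous up to $\overline{\D}$. This is the point where we use that $V$ and $W$ are genuine analytic discs with $C^2$ boundary behaviour rather than abstract varieties.

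Now suppose $f(\xi) = f(\z)$ with $\xi \ne \z$. By the hypothesis $\M_f = \M_g$, every $g_i$ is a continuous element of $\M_f$ as well. Applying Theorem \ref{thrm: crossings to multipliers} to the embedding $f$ — whose self-crossing $f(\xi) = f(\z)$ forces every continuous multiplier of $\M_f$ to take equal values at $\xi$ and $\z$ — we obtain $g_i(\xi) = g_i(\z)$ for every $i$, and therefore $g(\xi) = g(\z)$. Interchanging $f$ and $g$ yields the reverse implication, completing the biconditional.

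I do not expect a serious obstacle here: the whole argument is a short consequence of Theorem \ref{thrm: crossings to multipliers}. The only step requiring any care is verifying that the coordinate functions genuinely belong to the multiplier algebra $\M_g$ — not merely to $H^{\infty}(W)$ — and are continuous on $\overline{\D}$; both facts follow at once from $z_i \in \M_d$ and from $g$ being $C^2$ up to the boundary. The minor subtlety is simply that separating $g(\xi)$ from $g(\z)$ only requires that \emph{some} coordinate $g_i$ distinguishes $\xi$ from $\z$, so the contradiction with Theorem \ref{thrm: crossings to multipliers} is produced by a single coordinate multiplier.
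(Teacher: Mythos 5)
Your proposal is correct and is essentially the paper's own proof: both arguments note that the coordinate functions of one embedding map are continuous multipliers lying in the common algebra $\M_f = \M_g$, and then invoke Theorem \ref{thrm: crossings to multipliers} at the self-crossing of the other embedding to force those coordinates to agree, with the biconditional obtained by swapping $f$ and $g$.
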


    \begin{proof}
        The result follows from Theorem \ref{thrm: crossings to multipliers}. Let us write $f$ as $f = (f_1, \ldots, f_d)$. Note that $z_j \in \M_d, \ j = 1, \ldots, d$, so that if we compose with $f$ we get $f_j \in \M_f, \ j = 1, \ldots, d$. Since $\M_f = \M_g$, by Theorem \ref{thrm: crossings to multipliers}, $g(\xi) = g(\z) \implies f_j(\xi) = f_j(\z), \ j = 1, \ldots, d$, which means exactly $f(\xi) = f(\z)$. Exchanging $f$, $g$ and repeating this argument we get the desired equivalence.    
    \end{proof}

    \subsection{Proof of Theorem \ref{thrm: isomorphic algebras to automorphism}} \label{subsec: Theorem 2}
    First, we show that analytic discs are irreducible.
    \begin{lem} \label{lem: irreducible varieties}
        If $V$ is an analytic disc attached to the unit sphere, then $V$ is irreducible in the sense of \cite[Section 5.1]{Sal}, i.e., for any regular point $\l \in V$ and any $\e > 0$ the intersection of zero sets of all multipliers vanishing on $V \cap B_{\e}(\l)$, a small neighborhood of $\l$ in $V$, is exactly $V$.
    \end{lem}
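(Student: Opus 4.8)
The plan is to reduce the whole statement to the identity principle for holomorphic functions on $\D$. Recall that every multiplier $\f \in \M_d$ is in particular holomorphic on $\B_d$ (indeed $\f = \f \cdot 1$ with $1 \in H^2_d$), so for the embedding map $f \colon \D \to \B_d$ the pullback $\f \circ f$ is holomorphic on the connected domain $\D$. For a family $S \subseteq \M_d$ write $\ZZ(S) = \{ z \in \B_d : \f(z) = 0 \text{ for all } \f \in S \}$, and $\ZZ(\f) = \ZZ(\{\f\})$.

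Fix a regular point $\l \in V$ and $\e > 0$, and set $E = \{ \f \in \M_d : \f|_{V \cap B_\e(\l)} = 0 \}$. Since $f$ is an injective immersion on $\D$ (its derivative never vanishes), $V \subseteq \B_d$ is a smoothly embedded one-dimensional submanifold, so every point of $V$ is regular; in particular $\l = f(z_0)$ for some $z_0 \in \D$, and $U := f^{-1}(B_\e(\l))$ is a nonempty open subset of $\D$ containing $z_0$. The key step is to show $E = J_V$. The inclusion $J_V \subseteq E$ is immediate. For the reverse, take $\f \in E$; then $\f \circ f$ vanishes on $U$, because $f(U) \subseteq V \cap B_\e(\l)$. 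As $\f \circ f$ is holomorphic on the connected open set $\D$ and vanishes on the nonempty open subset $U$, the identity principle forces $\f \circ f \equiv 0$, i.e.\ $\f$ vanishes on $f(\D) = V$, so $\f \in J_V$.

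Finally I would conclude using that $V$ is a multiplier variety. The set in question is $\bigcap_{\f \in E} \ZZ(\f) = \ZZ(E) = \ZZ(J_V)$. Writing $V = \ZZ(E_0)$ for some defining family $E_0 \subseteq \M_d$, we have $E_0 \subseteq J_V$, whence $\ZZ(J_V) \subseteq \ZZ(E_0) = V$; the opposite inclusion $V \subseteq \ZZ(J_V)$ holds by the very definition of $J_V$. Hence $\ZZ(J_V) = V$, which is exactly the asserted irreducibility.

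There is no serious obstacle here; the argument is essentially the identity principle. The only points that require a little care are that every point of $V$ really is regular, so that the preimage $U$ is genuinely open and nonempty, and that $V$ coincides with the zero set of its \emph{full} vanishing ideal $J_V$ rather than merely of an a priori chosen defining family $E_0$ — both of which follow from $V$ being a smoothly embedded multiplier variety.
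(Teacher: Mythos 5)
Your proof is correct and follows essentially the same route as the paper's: pull back a multiplier vanishing on $V \cap B_\e(\l)$ through the embedding map $f$ and apply the identity principle to the holomorphic function $\f \circ f$ on the open set $f^{-1}(B_\e(\l)) \subseteq \D$. The only difference is cosmetic — the paper argues by contradiction, while you argue directly and spell out the containment $\ZZ(J_V) \subseteq V$ (which the paper leaves implicit), a small point in your favor.
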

    \begin{proof}
        Suppose $V$ is not irreducible. This means that there is $\l \in V$ and $\e > 0$ such that the intersection of zero sets of all multipliers vanishing on $V \cap B_{\e}(\l)$ is a proper subset of $V$. Hence, there is $v \in V$ and $\f \in \M_d$ such that $\f|_{V \cap B_{\e}(\l)} = 0$ but $\f(v) \ne 0$. Let $f$ be the embedding map of $V$. Then, $v = f(a)$ for some $a \in \D$. Consider a function $\psi = \f \circ f$, it is analytic in $\D$. Since $\f|_{V \cap B_{\e}(\l)} = 0$, then $\psi|_{f^{-1}(B_{\e}(\l))} = 0$. But the set $f^{-1}(B_{\e}(\l))$ is open in $\D$, which means that $\psi = 0$. This contradicts the fact that $\psi(a) = \f(v) \ne 0$.
    \end{proof}
    
    Now, we can prove Theorem \ref{thrm: isomorphic algebras to automorphism}:
    \begin{thrmrestate}
        Suppose $V$, $W$ are analytic discs attached to the unit sphere, and $f$, $g$ are the respective embedding maps. If $\M_f \cong \M_g$ algebraically, then there exists $\mu$, an automorphism of the unit disc $\D$, such that $\M_f = \M_{g \circ \mu}$ with equivalent norms.
    \end{thrmrestate}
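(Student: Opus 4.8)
The plan is to reduce the abstract algebraic isomorphism to a concrete composition operator and then transport it from the varieties $V$ and $W$ to the unit disc $\D$. First I would note that, by the very definitions of $\M_f$ and $\M_g$ as isometric pullbacks of $\M_V$ and $\M_W$ along $f$ and $g$, the hypothesis $\M_f \cong \M_g$ algebraically is the same as $\M_V \cong \M_W$ algebraically. By Lemma \ref{lem: irreducible varieties} both $V$ and $W$ are irreducible varieties, so this places us exactly in the setting of the rigidity result \cite[Theorem 5.5]{Sal}, \cite{DSR15}. That result supplies a multiplier biholomorphism $F \colon V \to W$, with multiplier inverse $G = F^{-1} \colon W \to V$, which \emph{implements} the isomorphism by composition: the algebra isomorphism $\M_W \to \M_V$ is $\psi \mapsto \psi \circ F$, and it is a bounded isomorphism with bounded inverse. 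Since an isomorphism of unital algebras is automatically unital, no weight appears and the composition is unweighted.

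Next I would pass to the disc. Because $V = f(\D)$ and $W = g(\D)$ lie in the open balls and $f$, $g$ are biholomorphisms onto their images (injective with nonvanishing derivative, as in Definition \ref{def: analytic disc}), the composition $\mu := g^{-1} \circ F \circ f \colon \D \to \D$ is a biholomorphic self-map of $\D$, hence an automorphism of $\D$. By construction $g \circ \mu = F \circ f$, so $g \circ \mu$ is again an embedding map of $W$, and its inverse is $(g \circ \mu)^{-1} = f^{-1} \circ G$.

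Finally I would check $\M_f = \M_{g \circ \mu}$ with equivalent norms directly from the definitions. Take $\f \in \M_{g \circ \mu}$ and set $\psi := \f \circ (g \circ \mu)^{-1} \in \M_W$, so that $\f = \psi \circ (g \circ \mu)$. Then $\f \circ f^{-1} = \psi \circ (g \circ \mu) \circ f^{-1} = \psi \circ F$, which lies in $\M_V$ because composition with $F$ maps $\M_W$ into $\M_V$; hence $\f \in \M_f$. The reverse inclusion $\M_f \subseteq \M_{g \circ \mu}$ is symmetric, using $G$ in place of $F$. For the norms, the pullback identities give $\|\f\|_{\M_f} = \|\f \circ f^{-1}\|_{\M_V} = \|\psi \circ F\|_{\M_V}$ and $\|\f\|_{\M_{g \circ \mu}} = \|\psi\|_{\M_W}$; since $\psi \mapsto \psi \circ F$ is a bounded isomorphism with bounded inverse, $\|\psi \circ F\|_{\M_V} \asymp \|\psi\|_{\M_W}$, and the two norms on the common function space are equivalent.

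The main obstacle is the first step: the passage from an abstract algebraic isomorphism to an honest composition with a biholomorphism. This is precisely the content of \cite[Theorem 5.5]{Sal}, \cite{DSR15}, and the only additional input I must supply is the irreducibility of analytic discs, which is exactly Lemma \ref{lem: irreducible varieties}. Once the isomorphism is known to be a composition operator, the remaining steps are bookkeeping transports between $\D$, $V$ and $W$, together with the elementary fact that a biholomorphic self-map of $\D$ is an automorphism.
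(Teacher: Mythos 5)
Your proposal is correct and follows essentially the same route as the paper: reduce to $\M_V \cong \M_W$, invoke Lemma \ref{lem: irreducible varieties} together with \cite[Theorem 5.5]{Sal} to realize the isomorphism as composition with multiplier biholomorphisms $F$, $G$, define $\mu$ by conjugating with $f$ and $g$, and check $\M_f = \M_{g\circ\mu}$ with equivalent norms by the same bookkeeping (the paper justifies boundedness of the isomorphism via automatic continuity for semi-simple Banach algebras, which is the only detail you leave implicit).
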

    \begin{proof}
        Suppose that $\M_f \cong \M_g$ algebraically. 
        This is equivalent to $\M_V \cong \M_W$. Denote by $\Phi: \M_V \to \M_W$ the isomorphism.
        By Lemma \ref{lem: irreducible varieties}, the varieties $V$ and $W$ are irreducible. 
        Thus, by \cite[Theorem 5.5]{Sal}, there are maps $F, G: \B_d \to \B_d$ with multiplier coefficients, i.e., $F_j, G_j \in \M_d, \ j = 1, \ldots, d$, such that $F \circ G = \id_V$, $G \circ F = \id_W$ and the composition with these maps gives us $\Phi$, i.e.,
        \begin{equation*}
            \Phi(\f) = \f \circ F, \ \f \in \M_V, \qquad \Phi^{-1}(\psi) = \psi \circ G, \ \psi \in \M_W. 
        \end{equation*}
        To go back to the unit disc we define
        \begin{equation*}
            \mu = g^{-1} \circ G \circ f : \D \to \D,
        \end{equation*}
        so that
        \begin{equation*}
            \mu^{-1} = f^{-1} \circ F \circ g: \D \to \D.
        \end{equation*}
        Since $f^{-1}, g^{-1}$ are analytic on $V$, $W$ respectively, we see that $\mu$ is an analytic bijection from $\D$ onto itself, and, hence, a disc automorphism.
        
        By definition,
        \begin{equation*}
            \f \in \M_{g \circ \mu} \iff \f \circ \mu^{-1} \circ g^{-1} \in \M_W \iff \f \circ \mu^{-1} \circ g^{-1} \circ G \in \M_V \iff
        \end{equation*}
        \begin{equation*}
            \f \circ \mu^{-1} \circ g^{-1} \circ G \circ f \in \M_f \iff \f \in \M_f.
        \end{equation*}
        Since $\Phi$ is continuous as a homomorphism of semi-simple Banach algebras and the norms on $\M_f, \M_g$ are inherited from $\M_V, \M_W$ we conclude that $\M_f = \M_{g \circ \mu}$ with equivalent norms.    
    \end{proof}

\section{First order equivalence and examples} \label{sec: A and examples}
    We begin this section by considering how the semi-invariant $A_f(\xi) = \langle f(\xi), f'(\xi) \xi \rangle, \ \xi \in \TT$ changes under unit disc automorphisms.
        \begin{lem} \label{lem: A_f under mu}
            Suppose $f$ is an embedding map, $\mu$ is a disc automorphism
            \begin{equation*}
                \mu(z) = \l \frac{\a - z}{1 - \bar \a z}, \quad \l \in \TT, \, \a \in \D.
            \end{equation*}
            Then
            \begin{equation*}
                A_{f \circ \mu}(\xi) = A_f (\mu(\xi)) \frac{1 - |\a|^2}{|\a - \xi|^2}. 
            \end{equation*}
        \end{lem}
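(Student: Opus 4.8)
The plan is to reduce the identity to a chain-rule computation followed by an algebraic simplification that exploits the fact that $\mu$ preserves the unit circle. First I would expand $A_{f\circ\mu}(\xi)$ directly from the definition. Since $\mu$ is analytic, $(f\circ\mu)'(\xi)=f'(\mu(\xi))\mu'(\xi)$, so
\[
    A_{f\circ\mu}(\xi)=\langle f(\mu(\xi)),\,f'(\mu(\xi))\,\mu'(\xi)\,\xi\rangle .
\]
Pulling the scalar $\mu'(\xi)\xi$ out of the conjugate-linear second slot of the inner product gives $A_{f\circ\mu}(\xi)=\overline{\mu'(\xi)\xi}\,\langle f(\mu(\xi)),f'(\mu(\xi))\rangle$.

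The next step is to recognize that the remaining inner product is $A_f(\mu(\xi))$ up to a unimodular factor. Writing $\eta=\mu(\xi)$ and using $|\eta|=1$ (a disc automorphism maps $\TT$ to $\TT$), we have $A_f(\eta)=\langle f(\eta),f'(\eta)\,\eta\rangle=\bar\eta\,\langle f(\eta),f'(\eta)\rangle$, hence $\langle f(\eta),f'(\eta)\rangle=\eta\,A_f(\eta)$. Substituting back yields
\[
    A_{f\circ\mu}(\xi)=\mu(\xi)\,\bar\xi\,\overline{\mu'(\xi)}\;A_f(\mu(\xi)),
\]
so the whole problem collapses to identifying the scalar weight $\mu(\xi)\,\bar\xi\,\overline{\mu'(\xi)}$ with $\tfrac{1-|\alpha|^2}{|\alpha-\xi|^2}$.

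For this last step I would first compute $\mu'(z)=-\lambda\,\dfrac{1-|\alpha|^2}{(1-\bar\alpha z)^2}$, then substitute the explicit forms of $\mu(\xi)$ and $\mu'(\xi)$ and repeatedly use $|\xi|=|\lambda|=1$ together with the boundary identity $(\alpha-\xi)(1-\bar\alpha\xi)=-\xi\,|\alpha-\xi|^2$, which holds because $|1-\bar\alpha\xi|=|\alpha-\xi|$ on $\TT$. The hard part will be nothing more than bookkeeping of the phases, since every factor in the weight has modulus one except $\mu'$; the risk is a sign or conjugation slip, not a genuine difficulty.

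A cleaner route sidesteps the phase-chasing entirely. The weight has modulus $|\mu(\xi)|\,|\xi|\,|\mu'(\xi)|=|\mu'(\xi)|=\tfrac{1-|\alpha|^2}{|\alpha-\xi|^2}$, the standard modulus of the boundary derivative of a disc automorphism (again using $|1-\bar\alpha\xi|=|\alpha-\xi|$ on $\TT$), while \cite[Corollary 3.2]{Dav} guarantees that both $A_f$ and $A_{f\circ\mu}$ are real and positive. Therefore the scalar weight relating them in the displayed equation above must itself be a positive real number, forcing it to equal its modulus $\tfrac{1-|\alpha|^2}{|\alpha-\xi|^2}$ and giving the claim with no phase computation at all.
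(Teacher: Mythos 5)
Your proposal is correct and follows essentially the same route as the paper: apply the chain rule, pull the scalar $\mu'(\xi)\xi$ out of the conjugate-linear slot, and identify the resulting weight with $\frac{1-|\alpha|^2}{|\alpha-\xi|^2}$ via the boundary identity $(\alpha-\xi)(1-\bar\alpha\xi)=-\xi|\alpha-\xi|^2$ (the paper groups the factors as $\overline{\tfrac{\mu'(\xi)}{\mu(\xi)}\xi}$ and computes directly). Your closing observation that positivity of $A_f$ and $A_{f\circ\mu}$ forces the weight to equal its modulus $|\mu'(\xi)|$ is a valid shortcut that spares the phase bookkeeping, but it does not change the substance of the argument.
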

        \begin{proof}
            Expand
            \begin{align*}
                A_{f \circ \mu}(\xi) = \langle f(\mu(\xi)), (f \circ \mu)'(\xi) \xi \rangle = \langle f(\mu(\xi)), f'(\mu(\xi)) \mu'(\xi) \xi \rangle &= \\
                \langle f(\mu(\xi)), f'(\mu(\xi)) \mu(\xi) \frac{\mu'(\xi)}{\mu(\xi)} \xi \rangle &= \\
                \overline{\frac{\mu'(\xi)}{\mu(\xi)} \xi} A_f (\mu(\xi)).
            \end{align*}
            Hence, it is enough to show
            \begin{equation*}
                \frac{\mu'(\xi)}{\mu(\xi)} \xi = \frac{1 - |\a|^2}{|\a - \xi|^2}.  
            \end{equation*}
            Indeed,
            \begin{equation*}
                \mu'(z) = \l \frac{|\a|^2 - 1}{(1 - \bar \a z)^2},
            \end{equation*}
            so that
            \begin{equation*}
                \frac{\mu'(z)}{\mu(z)} = \frac{|\a|^2 - 1}{(\a - z)(1 - \bar \a z)}.
            \end{equation*}
            Thus,
            \begin{equation*}
                \frac{\mu'(\xi)}{\mu(\xi)} \xi = \frac{|\a|^2 - 1}{(\a - \xi)(1 - \bar \a \xi)} \xi = \frac{|\a|^2 - 1}{(\a - \xi)(\bar \xi - \bar \a)} = \frac{1 - |\a|^2}{|\a - \xi|^2}, 
            \end{equation*}
            which finishes the proof.
        \end{proof}
    \subsection{Proof of Theorem \ref{thrm: multipliers to bilip}} \label{subsec: Theorem 3}
        Now we are ready to prove Theorem \ref{thrm: multipliers to bilip}:
        \begin{thrmrestate}
            Suppose $V$, $W$ are analytic discs attached to the unit sphere, and $f$, $g$ are the respective embedding maps. If $\M_f = \M_g$ with equivalent norms, then for $\xi, \z \in \TT$ such that $f(\xi) = f(\z)$ (and so Theorem \ref{thrm: algebras to crossings} implies $g(\xi) = g(\z)$) we have
            \begin{equation*}
                \frac{A_f(\xi)}{A_f(\z)} = \frac{A_g(\xi)}{A_g(\z)}.
            \end{equation*}            
        \end{thrmrestate}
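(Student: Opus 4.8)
The plan is to reduce to a crossing at $\pm 1$ and then read off the ratio $A_f(1)/A_f(-1)$ from an intrinsic metric on the multiplier algebra. First I would fix $\xi,\z$ with $f(\xi)=f(\z)$, choose a disc automorphism $\mu$ with $\mu(1)=\xi$, $\mu(-1)=\z$, and pass to $\tilde f=f\circ\mu$, $\tilde g=g\circ\mu$, which have their crossing at $\pm1$ and still satisfy $\M_{\tilde f}=\M_{\tilde g}$ with equivalent norms. By Lemma \ref{lem: A_f under mu} the quantities $A_{\tilde f}(1)/A_{\tilde f}(-1)$ and $A_{\tilde g}(1)/A_{\tilde g}(-1)$ differ from $A_f(\xi)/A_f(\z)$ and $A_g(\xi)/A_g(\z)$ by the \emph{same} Jacobian factor $|\a+1|^2/|\a-1|^2$, so it suffices to treat the crossing at $\pm1$, i.e. to show $A_f(1)/A_f(-1)=A_g(1)/A_g(-1)$. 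Here $A_f(1)=\langle f(1),f'(1)\rangle=A$ and $A_f(-1)=-\langle f(-1),f'(-1)\rangle=B$ in the notation of Theorem \ref{thrm: 1 = -1}, both positive by \cite{Dav}; I write $A'=A_g(1)$, $B'=A_g(-1)$.

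The key device is an intrinsic ``distance'' on the multiplier algebra. For a complete Pick space with multiplier algebra $\M$ set
\[
  \rho_\M(z,w)=\sup\{\,|\f(z)| : \f\in\M,\ \|\f\|_\M\le1,\ \f(w)=0\,\}.
\]
Since $\H_f$ and $\H_g$ are restrictions of the complete Pick Drury--Arveson space, the one-point Pick property for complete Pick spaces \cite{AglBook} gives
\[
  \rho_\M(z,w)=\sqrt{1-|\langle\hat\k_z,\hat\k_w\rangle|^2},\qquad \hat\k_z=\k_z/\|\k_z\|.
\]
Crucially, $\rho_\M$ is expressed purely through $\M$, its norm and the point evaluations at $z,w\in\D$; as $\M_f$ and $\M_g$ are the \emph{same} algebra of functions on $\D$ with equivalent norms, there are constants $0<c_1\le c_2$ with $c_1\rho_g(z,w)\le\rho_f(z,w)\le c_2\rho_g(z,w)$ for all $z,w\in\D$.

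Next I would compute $\rho$ near the crossing using the boundary expansions already obtained in the proof of Theorem \ref{thrm: 1 = -1}. For $z=1-x$, $w=-1+y$ one has $1-\|f(z)\|^2=2Ax+o(x)$, $1-\|f(w)\|^2=2By+o(y)$ and $1-\langle f(z),f(w)\rangle=Ax+By+o(x)+o(y)$, whence
\[
  |\langle\hat\k^f_z,\hat\k^f_w\rangle|=\frac{\sqrt{(1-\|f(z)\|^2)(1-\|f(w)\|^2)}}{|1-\langle f(z),f(w)\rangle|}=\frac{2\sqrt{Ax\,By}}{Ax+By}\,(1+o(1)),
\]
so that $\rho_f(z,w)=\dfrac{|Ax-By|}{Ax+By}+o(1)$, and the analogous formula holds for $g$ with $A,B$ replaced by $A',B'$.

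Finally I would specialize to the degenerate direction $Ax=By$, i.e. $z=1-t/A$, $w=-1+t/B$ with $t\to0$. There the unnormalized difference $\k^f_z-\k^f_w$ stays bounded (this is exactly \eqref{eq: k - k} and Corollary \ref{crl: k - k w 0}) while $\|\k^f_z\|,\|\k^f_w\|\sim(2t)^{-1/2}\to\infty$, so $\|\hat\k^f_z-\hat\k^f_w\|\to0$, forcing $|\langle\hat\k^f_z,\hat\k^f_w\rangle|\to1$ and hence $\rho_f(z,w)\to0$. For the \emph{same} points, $A'x=A't/A$ and $B'y=B't/B$ have the fixed ratio $A'B/(AB')$, so by the leading-order formula
\[
  \rho_g(z,w)\longrightarrow\frac{|A'B-AB'|}{A'B+AB'}.
\]
If $A/B\ne A'/B'$ this limit is a positive constant, contradicting $\rho_f\ge c_1\rho_g$; therefore $A'B=AB'$, i.e. $A_g(1)/A_g(-1)=A_f(1)/A_f(-1)$, which after undoing the automorphism yields the claim. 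I expect the main obstacle to be the asymptotic analysis of $\rho$ at the crossing: along $Ax=By$ the leading term of $\rho_f$ only vanishes, so one must invoke the norm bound \eqref{eq: k - k} (not the leading-order expansion) to conclude $\rho_f\to0$, whereas for $g$ the non-matching ratio keeps $\rho_g$ bounded away from $0$.
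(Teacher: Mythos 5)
Your proposal is correct and follows essentially the same route as the paper: reduce to the crossing at $\pm 1$ via Lemma \ref{lem: A_f under mu}, use the pseudohyperbolic-type metric $\sup\{|\f(z)|:\f\in\M,\ \|\f\|\le1,\ \f(w)=0\}=\sqrt{1-|\langle\hat\k_z,\hat\k_w\rangle|^2}$ (the paper's $d_f$ from \cite[Lemma 9.9]{AglBook}), observe that equivalent multiplier norms give equivalent metrics, and test along $z=1-t/A$, $w=-1+t/B$ to force $\frac{4ab}{(a+b)^2}=1$, i.e.\ $a=b$. The only (harmless) difference is that you invoke \eqref{eq: k - k}/Corollary \ref{crl: k - k w 0} to get $\rho_f\to0$, whereas the paper gets this directly from the first-order expansion, since $\frac{(2t+o(t))(2t+o(t))}{|2t+o(t)|^2}=1+o(1)$ already yields $d_f^2=o(1)$.
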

        Let us introduce a notation we use in the proof. For two quantities $A,B > 0$ depending on some parameters, we write $A \asymp B$ whenever there exists a constant $C > 0$ that is independent of the parameters such that $A \le CB$ and $B \le CA$.
        \begin{proof}
            In light of Lemma \ref{lem: A_f under mu}, it is enough to prove this theorem for $\xi = 1, \z = -1$.

            Consider a metric on $\D$ induced by $\H_f$, see \cite[Lemma 9.9]{AglBook}:
            \begin{equation*}
                d_f(z,w) = \sqrt{1 - \frac{|\k_f(z,w)|^2}{\k_f(z,z)\k_f(w,w)}}.
            \end{equation*}
            It is easy to see that since $\H_f$ is complete Pick, we have
            \begin{equation*}
                d_f(z,w) = \sup \left\{ |\f(z)|: \: \f \in \M_f, \, ||\f||_{\M_f} \le 1, \, \f(w) = 0 \right\}.
            \end{equation*}
            We have a similar metric $d_g$ for $g$ instead of $f$. Since $\M_f = \M_g$ with equivalent norms, we conclude that the metrics $d_f$ and $d_g$ are equivalent, meaning that the identity map between $(\D, d_f)$ to $(\D, d_g)$ is bi-Lipschitz. Hence, $d_f^2$ and $d_g^2$ are equivalent as well, i.e.,
            \begin{equation*}
                1 - \frac{(1 - ||f(z)||^2)(1 - ||f(w)||^2)}{|1 - \langle f(z), f(w) \rangle|^2} \asymp 1 - \frac{(1 - ||g(z)||^2)(1 - ||g(w)||^2)}{|1 - \langle g(z), g(w) \rangle|^2}, \quad z, w \in \D.
            \end{equation*}
            Similarly to Theorem \ref{thrm: 1 = -1} we consider $z = 1 - x, w = -1 + y$ for $x, y > 0$ with $x, y \to 0$. Repeating the insight from Theorem \ref{thrm: 1 = -1} we set
            \begin{equation*}
                x = \frac{t}{A_f(1)}, \qquad y = \frac{t}{A_f(-1)},
            \end{equation*}
            for $t > 0$, $t \to 0$. This way, we have
            \begin{equation*}
                1 - ||f(z)||^2 = 2t + o(t), \quad 1 - ||f(w)||^2 = 2t + o(t), \quad 1 - \langle f(z), f(w) \rangle = 2t + o(t). 
            \end{equation*}
            Thus,
            \begin{equation*}
                \frac{(1 - ||f(z)||^2)(1 - ||f(w)||^2)}{|1 - \langle f(z), f(w) \rangle|^2} = \frac{(2t + o(t))(2t + o(t))}{|2t + o(t)|^2} = 1 + o(1), \quad t \to 0.    
            \end{equation*}
            We conclude that 
            \begin{equation*}
                d^2_f \left( 1 - \frac{t}{A_f(1)}, -1 + \frac{t}{A_f(-1)} \right) = o(1), \quad t \to 0. 
            \end{equation*} 
            Next, we look at what happens to $d_g^2$. Set $a = \frac{A_g(1)}{A_f(1)}$, $b = \frac{A_g(-1)}{A_f(-1)}$. We need to prove that $a = b$. We expand
            \begin{equation*}
                1 - ||g(z)||^2 = 2 a t + o(t), \quad 1 - ||g(w)||^2 = 2 b t + o(t), \quad 1 - \langle g(z), g(w) \rangle = \left( a + b \right) t + o(t).
            \end{equation*}
            Thus,  
            \begin{equation*}
                \frac{(1 - ||g(z)||^2)((1 - ||g(w)||^2))}{|1 - \langle g(z), g(w) \rangle|^2} = \frac{4ab}{(a + b)^2} + o(1), 
            \end{equation*}
            and so
            \begin{equation*}
                d^2_g \left( 1 - \frac{t}{A_f(1)}, -1 + \frac{t}{A_f(-1)} \right) = 1 - \frac{4ab}{(a + b)^2} + o(1), \quad t \to 0.
            \end{equation*}
            Since $d_g^2 \asymp d_f^2 = o(1), \ t \to 0$, we must have
            \begin{equation*}
                \frac{4ab}{(a + b)^2} = 1.
            \end{equation*}
            It remains to notice that
            \begin{equation*}
                \frac{4ab}{(a + b)^2} = 1 \iff (a + b)^2 = 4ab \iff (a - b)^2 = 0 \iff a = b.
            \end{equation*}
            We conclude that $a = b$, which means $\frac{A_f(1)}{A_g(1)} = \frac{A_f(-1)}{A_g(-1)}$, as we wanted.
        \end{proof}

    \subsection{Proof of Theorem \ref{thrm: multiplier isomorphism two points to two auto}} \label{subsec: Theorem 4}
    \begin{thrmrestate}
            Suppose $V$, $W$ are analytic discs attached to the unit sphere, and $f$, $g$ are the respective embedding maps with the only self-crossing at $\pm 1$. Then $\M_f \cong \M_g$ algebraically if and only if $\M_f = \M_{g \circ \mu}$, where either
            \begin{equation*}
                \mu(z) = \frac{z - \a}{1 - \a z}, \text{ or} \qquad \mu(z) = \frac{\b - z}{1 - \b z}
            \end{equation*}
            with constants $\a, \b \in (-1, 1)$ that can be explicitly computed in terms of $A_f(\pm 1)$ and $A_g(\pm 1)$:
            \begin{equation*}
                \a = \frac{\sqrt{A_f(1)A_g(-1)} - \sqrt{A_f(-1)A_g(1)}}{\sqrt{A_f(1)A_g(-1)} + \sqrt{A_f(-1)A_g(1)}}, \qquad \b = \frac{\sqrt{A_f(1)A_g(1)} - \sqrt{A_f(-1)A_g(-1)}}{\sqrt{A_f(1)A_g(1)} + \sqrt{A_f(-1)A_g(-1)}}.
            \end{equation*}
    \end{thrmrestate}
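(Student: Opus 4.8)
The plan is to handle the two implications separately; the content is almost entirely in the forward direction. For the reverse implication I would first observe that composing with a disc automorphism $\mu$ is always an algebra isomorphism $\M_g \to \M_{g \circ \mu}$, since $\f \mapsto \f \circ \mu$ is a bijection between these algebras (with inverse $\psi \mapsto \psi \circ \mu^{-1}$), as one reads off directly from their definitions. Thus $\M_f = \M_{g \circ \mu} \cong \M_g$, and the specific shape of $\mu$ is irrelevant for this direction.

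For the forward direction, assume $\M_f \cong \M_g$ algebraically. I would first apply Theorem \ref{thrm: isomorphic algebras to automorphism} to obtain a disc automorphism $\nu$ with $\M_f = \M_{g \circ \nu}$ and equivalent norms. Feeding this equality into Theorem \ref{thrm: algebras to crossings}, applied to the pair $f$ and $g \circ \nu$, forces these two embeddings to have the same self-crossings on $\TT$. Since $(g \circ \nu)(\xi) = (g \circ \nu)(\z)$ is equivalent to $\{\nu(\xi), \nu(\z)\} = \{1, -1\}$ (using that $g$ crosses only at $\pm 1$), the crossing set of $g \circ \nu$ is exactly $\nu^{-1}(\{1, -1\})$; equating it with the crossing set $\{1, -1\}$ of $f$ yields $\nu(\{1, -1\}) = \{1, -1\}$.

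Next I would classify the disc automorphisms preserving $\{1, -1\}$. A direct computation with $\nu(z) = \l \frac{\a - z}{1 - \bar \a z}$ shows that the condition $\nu(\{1,-1\}) = \{1,-1\}$ splits into exactly two one-parameter families: those fixing both $1$ and $-1$, which are $\nu(z) = \frac{z - \a}{1 - \a z}$ with $\a \in (-1, 1)$, and those swapping them, which are $\nu(z) = \frac{\b - z}{1 - \b z}$ with $\b \in (-1, 1)$. This recovers the two candidate shapes. To identify the parameter I would invoke Theorem \ref{thrm: multipliers to bilip} for $f$ and $g \circ \nu$ at the crossing $\pm 1$, giving $\frac{A_f(1)}{A_f(-1)} = \frac{A_{g \circ \nu}(1)}{A_{g \circ \nu}(-1)}$, and then use Lemma \ref{lem: A_f under mu} to rewrite the right-hand side explicitly: in the fixing case it becomes $\frac{A_g(1)}{A_g(-1)} \cdot \frac{(1 + \a)^2}{(1 - \a)^2}$, and in the swapping case $\frac{A_g(-1)}{A_g(1)} \cdot \frac{(1 + \b)^2}{(1 - \b)^2}$. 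Solving each equation by taking the positive square root then yields precisely the displayed formulas for $\a$ and $\b$.

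The main obstacle is the uniqueness of the parameter: Theorem \ref{thrm: multipliers to bilip} constrains only a single ratio, so I must verify that this one equation determines the automorphism. This succeeds because within each family the parameter-to-ratio map $\a \mapsto \frac{(1 + \a)^2}{(1 - \a)^2}$ (and similarly for $\b$) is a strictly increasing bijection of $(-1,1)$ onto $(0, \infty)$, so the equation has a unique solution in $(-1, 1)$. The positivity $A_f, A_g > 0$ from \cite[Corollary 3.2]{Dav} ensures that the square roots, the final quotients, and the membership of the solution in $(-1, 1)$ are all legitimate.
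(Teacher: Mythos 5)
Your proposal is correct and follows essentially the same route as the paper: apply Theorem \ref{thrm: isomorphic algebras to automorphism} to get an automorphism $\nu$ with $\M_f = \M_{g\circ\nu}$, use Theorem \ref{thrm: algebras to crossings} to force $\nu(\{1,-1\}) = \{1,-1\}$ and hence one of the two candidate families, and then pin down the parameter via Theorem \ref{thrm: multipliers to bilip} combined with Lemma \ref{lem: A_f under mu}. Your added observation that $\a \mapsto (1+\a)^2/(1-\a)^2$ is a strictly increasing bijection of $(-1,1)$ onto $(0,\infty)$ is a slightly more explicit justification of the uniqueness of the parameter than the paper gives, but the argument is the same.
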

    \begin{proof}
        The fact that $\M_f = \M_{g \circ \mu}$ implies $\M_f \cong \M_g$ algebraically is evident for any automorphism $\mu$. Thus, we prove the other implication.

        Suppose $\M_f \cong \M_g$ algebraically. By Theorem \ref{thrm: isomorphic algebras to automorphism}, it means that there is a disc automorphism $\mu$ such that $\M_f = \M_{g \circ \mu}$ with equivalent norms. We claim that there are only two possible choices for $\mu$ as in the statement of the theorem.

        First, if $\M_f = \M_{g \circ \mu}$, then, by Theorem \ref{thrm: algebras to crossings}, $f$ and $g \circ \mu$ must have the same self-crossings on the boundary. But the only self-crossings for $f$ and $g$ are $\pm 1$, which means that $\mu(1) = \pm 1$ and $\mu(-1) = \mp 1$. Hence, either $\mu$ or $-\mu$ maps $1$ to $1$ and $-1$ to $-1$. Thus, either
        \begin{equation*}
            \mu(z) = \frac{z - \a}{1 - \a z}, \quad \a \in (-1,1) 
        \end{equation*}
        or
        \begin{equation*}
            \mu(z) = \frac{\b - z}{1 - \b z}, \quad \b \in (-1,1).    
        \end{equation*}
        It remains to prove that $\a$ and $\b$ are allowed to take only one value each, as in the statement of the theorem. Indeed, if $\M_f = \M_{g \circ \mu}$, then in light of Theorem \ref{thrm: multipliers to bilip}
        \begin{equation} \label{eq: A/A = A/A}
            \frac{A_f(1)}{A_f(-1)} = \frac{A_{g \circ \mu}(1)}{A_{g \circ \mu}(-1)}.
        \end{equation}
        
        If we consider $\mu(z) = (z - \a)/(1 - \a z)$, then \eqref{eq: A/A = A/A} together with Lemma \ref{lem: A_f under mu} gives us
        \begin{equation*}
            \frac{A_f(1)}{A_f(-1)} = \frac{A_g(1)}{A_g(-1)} \left( \frac{\a + 1}{1 - \a} \right)^2,
        \end{equation*}
        so that
        \begin{equation} \label{eq: alpha}
            \a = \frac{\sqrt{A_f(1)A_g(-1)} - \sqrt{A_f(-1)A_g(1)}}{\sqrt{A_f(1)A_g(-1)} + \sqrt{A_f(-1)A_g(1)}}.
        \end{equation}
        
        Similarly, if we consider $\mu(z) = (\b - z)/(1 - \b z)$, then
        \begin{equation*}
            \frac{A_f(1)}{A_f(-1)} = \frac{A_g(-1)}{A_g(1)} \left( \frac{\b + 1}{1 - \b} \right)^2,            
        \end{equation*}
        so that
        \begin{equation} \label{eq: beta}
            \b = \frac{\sqrt{A_f(1)A_g(1)} - \sqrt{A_f(-1)A_g(-1)}}{\sqrt{A_f(1)A_g(1)} + \sqrt{A_f(-1)A_g(-1)}}.
        \end{equation}    
    \end{proof}    

    \subsection{Proof of Theorem \ref{thrm: f_r and f_s}} \label{subsec: Theorem 5}
    \begin{thrmrestate}
        $\M_{f_r} \ne \M_{f_s}$ for $r \ne s$.
    \end{thrmrestate}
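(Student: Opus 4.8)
The plan is to derive everything from Theorem \ref{thrm: multipliers to bilip}. First I would record that each $f_r$ falls into the single-self-crossing framework: since $b_r(\pm 1) = \pm 1$ we have $f_r(1) = f_r(-1) = \tfrac{1}{\sqrt 2}(1,1)$, while the quoted result \cite[Theorem 5.2]{Dav} guarantees that $\pm 1$ is the only self-crossing. Thus for $r \ne s$ both $f_r$ and $f_s$ have exactly the same self-crossing $\{\pm 1\}$, so Theorem \ref{thrm: algebras to crossings} places no obstruction and the relevant distinction must come from first-order data.

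Suppose toward a contradiction that $\M_{f_r} = \M_{f_s}$ for some $r \ne s$. I would first upgrade this set-theoretic equality to equality with equivalent norms, so that the hypothesis of Theorem \ref{thrm: multipliers to bilip} is genuinely met: both $\M_{f_r}$ and $\M_{f_s}$ are commutative semisimple Banach algebras carrying the operator norms inherited from $\H_{f_r}$ and $\H_{f_s}$ (semisimplicity follows because point evaluations at the regular points are characters and separate the nonzero multipliers). Hence the identity map is an algebra isomorphism into a semisimple commutative Banach algebra and is automatically continuous in both directions, forcing the two norms to be equivalent. Theorem \ref{thrm: multipliers to bilip}, applied at the self-crossing $\xi = 1$, $\z = -1$, then yields
\begin{equation*}
    \frac{A_{f_r}(1)}{A_{f_r}(-1)} = \frac{A_{f_s}(1)}{A_{f_s}(-1)}.
\end{equation*}

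The heart of the argument is the explicit evaluation of $A_{f_r}(\pm 1) = \langle f_r(\xi), f_r'(\xi)\xi\rangle$. Using $f_r(z) = \tfrac{1}{\sqrt 2}(z^2, b_r(z)^2)$ together with $b_r(\pm 1) = \pm 1$ and $b_r'(z) = (1 - r^2)/(1 - rz)^2$, a short computation (giving $b_r'(1) = \tfrac{1+r}{1-r}$ and $b_r'(-1) = \tfrac{1-r}{1+r}$) produces
\begin{equation*}
    A_{f_r}(1) = \frac{2}{1 - r}, \qquad A_{f_r}(-1) = \frac{2}{1 + r},
\end{equation*}
so that
\begin{equation*}
    \frac{A_{f_r}(1)}{A_{f_r}(-1)} = \frac{1 + r}{1 - r}.
\end{equation*}
Since $r \mapsto \tfrac{1+r}{1-r}$ is strictly increasing on $(-1,1)$, the ratios for $r$ and $s$ differ whenever $r \ne s$, contradicting the displayed equality. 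This contradiction gives $\M_{f_r} \ne \M_{f_s}$.

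I do not anticipate a genuine obstacle here: the argument is a direct application of Theorem \ref{thrm: multipliers to bilip} followed by an elementary derivative computation and the injectivity of a Möbius-type ratio. The only point requiring a little care is the reduction from ``$\M_{f_r} = \M_{f_s}$ as sets'' to ``equal with equivalent norms,'' where one must invoke automatic continuity for semisimple commutative Banach algebras; everything after that is bookkeeping.
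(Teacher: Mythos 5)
Your proposal is correct and follows essentially the same route as the paper: apply Theorem \ref{thrm: multipliers to bilip} at the self-crossing $\pm 1$ and compute $A_{f_r}(1)/A_{f_r}(-1) = \tfrac{1+r}{1-r}$, which is strictly monotone in $r$. Your explicit remark that set-equality of the multiplier algebras upgrades to norm equivalence via automatic continuity for semisimple commutative Banach algebras is a valid point the paper leaves implicit, but it does not change the argument.
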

    \begin{proof}
        In light of Theorem \ref{thrm: multipliers to bilip} it is sufficient to prove that for $r > s > 0$
            \begin{equation*}
                \frac{A_{f_r}(1)}{A_{f_r}(-1)} \ne \frac{A_{f_s}(1)}{A_{f_s}(-1)}.
            \end{equation*}
            Evaluating at $\pm 1$ we get $f_r(1) = f_r(-1) = \frac{1}{\sqrt{2}}(1,1)$ and
            \begin{equation*}
                f_r'(1) = \frac{1}{\sqrt{2}} \left( 2, \, 2\frac{1 + r}{1 - r} \right), \qquad f_r'(-1) = -\frac{1}{\sqrt{2}} \left( 2, \, 2\frac{1 - r}{1 + r} \right).
            \end{equation*}
            Hence,
            \begin{equation*}
                \frac{A_{f_r}(1)}{A_{f_r}(-1)} = \frac{1 + \frac{1 + r}{1 - r}}{1 + \frac{1 - r}{1 + r}} = \frac{1 + r}{1 - r}
            \end{equation*}
            is strictly increasing for $r \in (0, 1)$, which finishes the proof.
    \end{proof}
            
    \subsection{Proof of Theorem \ref{thrm: f_r-r and f_s-s}} \label{subsec: Theorem 6}
    \begin{thrmrestate}
        $\M_{f_{r, -r}} \cong \M_{f_{s, -s}}$ algebraically if and only if  $\M_{f_{r, -r}} = \M_{f_{s, -s}}$.
    \end{thrmrestate}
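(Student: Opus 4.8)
The plan is to combine the rigidity of Theorem \ref{thrm: multiplier isomorphism two points to two auto} with the special $z \mapsto -z$ symmetry built into the embeddings $f_{r,-r}$. The direction ``$\M_{f_{r,-r}} = \M_{f_{s,-s}}$ implies $\M_{f_{r,-r}} \cong \M_{f_{s,-s}}$ algebraically'' is trivial (the identity is an algebraic isomorphism), so I focus on the converse. Since $b_r(\pm 1) = \pm 1$, both $f_{r,-r}$ and $f_{s,-s}$ have their unique self-crossing at $\pm 1$ (as established in the discussion preceding the theorem), so Theorem \ref{thrm: multiplier isomorphism two points to two auto} applies verbatim. The first step is to compute the semi-invariant $A_{f_{r,-r}}$ at the crossing points. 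Using $b_r(1)=1$, $b_r(-1)=-1$, $b_r'(\pm 1)=\frac{1 \pm r}{1 \mp r}$, and the analogous values for $b_{-r}$, a short derivative computation yields
\begin{equation*}
    A_{f_{r,-r}}(1) = A_{f_{r,-r}}(-1) = \frac{2(1 + r^2)}{1 - r^2},
\end{equation*}
and likewise with $s$ in place of $r$.

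The crucial point is the equality $A_{f_{r,-r}}(1) = A_{f_{r,-r}}(-1)$: because the embedding is symmetric under $z \mapsto -z$ up to a permutation of coordinates, the two crossing points are interchangeable and their $A$-values agree. Feeding $A_f(1) = A_f(-1)$ and $A_g(1) = A_g(-1)$ into the explicit formulas for $\a$ and $\b$ from Theorem \ref{thrm: multiplier isomorphism two points to two auto}, both numerators cancel and I obtain $\a = \b = 0$. Hence the only two candidate automorphisms are $\mu(z) = z$ and $\mu(z) = -z$, and the hypothesized isomorphism must realize one of them via $\M_{f_{r,-r}} = \M_{f_{s,-s} \circ \mu}$.

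It then remains to check that both candidates force $\M_{f_{r,-r}} = \M_{f_{s,-s}}$. The case $\mu = \id$ gives the conclusion immediately. For $\mu(z) = -z$ I would promote the same symmetry from the crossing points to the whole disc: from $b_s(-z) = -b_{-s}(z)$ and $b_{-s}(-z) = -b_s(z)$ one gets
\begin{equation*}
    f_{s,-s}(-z) = \frac{1}{\sqrt 2}\left( b_{-s}(z)^2, \, b_s(z)^2 \right) = \s\big( f_{s,-s}(z) \big),
\end{equation*}
where $\s$ denotes the coordinate-swap unitary of $\C^2$. Since $\s$ preserves the inner product, the kernel is unchanged, $\k^{f_{s,-s} \circ \mu}(z,w) = \k^{f_{s,-s}}(z,w)$ for all $z,w \in \D$. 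Two reproducing kernel Hilbert spaces with the same kernel coincide with equal norm, so $\M_{f_{s,-s}\circ\mu} = \M_{f_{s,-s}}$, and therefore $\M_{f_{r,-r}} = \M_{f_{s,-s}}$ in this case as well.

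I expect the only genuine subtlety to be this second case: a priori the automorphism $\mu(z) = -z$ could yield a different algebra, and the content of the argument is that the $z \mapsto -z$ symmetry of $f_{r,-r}$, realized through the coordinate swap $\s$, makes this rotation act trivially on the multiplier algebra. Everything else reduces to the already-established rigidity of Theorem \ref{thrm: multiplier isomorphism two points to two auto} together with the elementary boundary derivative computations recorded above.
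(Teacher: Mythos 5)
Your proof is correct and follows essentially the same route as the paper: establish $A_{f_{r,-r}}(1)=A_{f_{r,-r}}(-1)$, deduce $\a=\b=0$ from Theorem \ref{thrm: multiplier isomorphism two points to two auto}, and dispose of the $\mu(z)=-z$ case by observing that $f_{s,-s}(-z)$ is a coordinate swap of $f_{s,-s}(z)$, which leaves the kernel, and hence the multiplier algebra, unchanged. The only cosmetic difference is that the paper verifies the equality of the $A$-values via the same swap symmetry rather than by the explicit computation of $\frac{2(1+r^2)}{1-r^2}$, but both are valid.
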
  
    \begin{proof}
        Note that 
            \begin{equation} \label{eq: f_r z to -z}
                f_{r, -r}(-z) = f_{-r, r}(z) = \frac{1}{\sqrt{2}}(b_{-r}(z)^2, b_r(z)^2),    
            \end{equation}
            so that if $f_{r, -r}'(1) = (z_1, z_2)$, then $f'_{r, -r}(-1) = -(z_2, z_1)$. 
            Since
            \begin{equation*}
                f_{r, -r}(1) = f_{r, -r}(-1) = \frac{1}{\sqrt{2}}(1, 1),    
            \end{equation*} 
            we get 
            \begin{equation} \label{eq: A(1) = A(-1)}
                A_{f_{r, -r}}(1) = A_{f_{r, -r}}(-1).     
            \end{equation}
        
            Let us apply Theorem \ref{thrm: multiplier isomorphism two points to two auto} to $f_{r, -r}$ and $f_{s, -s}$. Calculating $\a$, $\b$ from \eqref{eq: alpha}, \eqref{eq: beta} and using \eqref{eq: A(1) = A(-1)} we get $\a = \b = 0$. Thus, in light of \eqref{eq: f_r z to -z}, Theorem \ref{thrm: multiplier isomorphism two points to two auto} implies that $\M_{f_{r, -r}} \cong \M_{f_{s, -s}}$ algebraically if and only if either $\M_{f_{r, -r}} = \M_{f_{s, -s}}$ or $\M_{f_{r, -r}} = \M_{f_{-s, s}}$. 
            
            Note that because of \eqref{eq: f_r z to -z}
            \begin{equation*}
                \langle f_{r, -r}(z), f_{r, -r}(w) \rangle = \langle f_{-r, r}(z), f_{-r, r}(w) \rangle,
            \end{equation*}
            whence,
            \begin{equation*}
                \k^{f_{r, -r}} = \k^{f_{-r, r}},
            \end{equation*}
            meaning $\H_{f_{r, -r}} = \H_{f_{-r, r}}$ with the same inner product, which means $\M_{f_{r, -r}} = \M_{f_{-r, r}}$ isometrically.

            We conclude that $\M_{f_{r, -r}} \cong \M_{f_{s, -s}}$ algebraically if and only if $\M_{f_{r, -r}} = \M_{f_{s, -s}}$.    
    \end{proof}

    \subsection{Proof of Theorem \ref{thrm: example single 3-self-crossing}} \label{subsec: Theorem 7}
        \begin{thrmrestate}
            There is a continuum of analytic discs attached to the unit sphere with the same $3$-point self-crossing and such that their multiplier algebras are all mutually non-isomorphic.  
        \end{thrmrestate}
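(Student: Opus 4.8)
The plan is to combine the rigidity already proved with an explicit continuum whose only obstruction to isomorphism is the derivative data $A$. Throughout I normalise the $3$-point self-crossing to sit at the cube roots of unity $1,\omega,\omega^2$ with $\omega=e^{2\pi i/3}$; this is legitimate, since a disc automorphism carries any positively oriented boundary triple to any other, so every $3$-point self-crossing is automorphically equivalent to this one.

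First I would record the reduction that turns the isomorphism problem into a statement about $A$. Suppose $\M_f\cong\M_g$ algebraically for two embeddings with self-crossing $\{1,\omega,\omega^2\}$. By Theorem \ref{thrm: isomorphic algebras to automorphism} there is $\mu\in\operatorname{Aut}(\D)$ with $\M_f=\M_{g\circ\mu}$ and equivalent norms, and by Theorem \ref{thrm: algebras to crossings} the maps $f$ and $g\circ\mu$ share their boundary self-crossings; hence $\mu^{-1}(\{1,\omega,\omega^2\})=\{1,\omega,\omega^2\}$, i.e.\ $\mu$ stabilises the equilateral triple. Since a Möbius automorphism is determined by the images of three boundary points and only the cyclic relabellings are orientation-compatible, the stabiliser is exactly the rotation group $\{z\mapsto\omega^k z:k=0,1,2\}\cong\Z/3$. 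For such a rotation $\a=0$ in Lemma \ref{lem: A_f under mu}, so the conformal factor equals $1$ and $A_{g\circ\mu}(\xi)=A_g(\omega^k\xi)$. Feeding $\M_f=\M_{g\circ\mu}$ into Corollary \ref{crl: n point A A A} then yields
\[
 [A_f(1):A_f(\omega):A_f(\omega^2)]=[A_g(1):A_g(\omega):A_g(\omega^2)]
\]
as points of $\mathbb{P}^2$, up to a cyclic shift of coordinates. Thus the class of the $A$-triple in $\mathbb{P}^2/(\Z/3)$ is an isomorphism invariant, and it suffices to produce a continuum of admissible embeddings realising distinct such classes.

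For the construction I would use coordinates built from finite Blaschke products, which automatically give $\|f\|=1$ on $\TT$ and $\|f\|<1$ inside. The key identity is that for a degree $n$ Blaschke product $B$ with zeros $a_1,\dots,a_n$ one has, on the circle, $A_B(\xi)=\sum_k \frac{1-|a_k|^2}{|\xi-a_k|^2}>0$; so a genuine degree $3$ product whose fibre over a point is exactly $\{1,\omega,\omega^2\}$ and which is \emph{not} a function of $z^3$ produces a positive triple $\bigl(A_B(1),A_B(\omega),A_B(\omega^2)\bigr)$ that is non-constant, the only degenerate value $[1:1:1]$ coming from the symmetric products $b_\a(z^3)$. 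I would therefore set $f_B=\tfrac{1}{\sqrt2}(z^3,B(z))$, with $B$ ranging over a continuous family of such collapsing products. Because $B$ is not a function of $z^3$ it distinguishes a generic interior point from its two $\omega$-rotates, so the pair $(z^3,B)$ is injective on $\D$; it is $C^2$ up to $\overline\D$, meets the sphere only on $\TT$, and one arranges the self-crossing to be exactly $\{1,\omega,\omega^2\}$. Then $A_{f_B}(\xi)=\tfrac12\bigl(3+A_B(\xi)\bigr)$, so varying $B$ sweeps a continuum of triples; restricting to a subfamily in which position $1$ is the strict maximum and $A_{f_B}(\omega)\neq A_{f_B}(\omega^2)$ makes the cyclic representative unique and lets one select uncountably many $B$'s with pairwise distinct classes in $\mathbb{P}^2/(\Z/3)$. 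By the invariance above, the corresponding multiplier algebras are mutually non-isomorphic.

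The main obstacle is the construction step, on two fronts. First, one must actually exhibit a continuum of degree $3$ Blaschke products with fibre $\{1,\omega,\omega^2\}$ that are not functions of $z^3$, and control their zeros well enough to guarantee that the triples $\bigl(A_B(1),A_B(\omega),A_B(\omega^2)\bigr)$ are non-degenerate and occupy distinct cyclic classes; a parameter count shows these products form a $3$-real-parameter family containing the $z^3$-locus as a proper subset, and I would cut out an explicit arc transversal to that locus, simultaneously checking that $(z^3,B)$ is injective on $\D$ and has no spurious boundary coincidences. Second, and more seriously, one must verify that each image $f_B(\D)$ is a multiplier variety and a bona fide embedding in the sense of Definition \ref{def: analytic disc}; here I would proceed exactly as for the two-point example $f_{r,s}$, invoking \cite[Theorem 5.2]{Dav} with the hypotheses checked for the pair $(z^3,B)$, and \cite[Corollary 3.2]{Dav} to confirm transversality and the positivity of $A_{f_B}$. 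Once these verifications are in place, the mutual non-isomorphism follows immediately from Theorem \ref{thrm: multipliers to bilip} and Corollary \ref{crl: n point A A A}.
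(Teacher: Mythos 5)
Your overall strategy is the same as the paper's: normalise the self-crossing to $\{1,\omega,\omega^2\}$, use Theorems \ref{thrm: isomorphic algebras to automorphism} and \ref{thrm: algebras to crossings} to reduce any algebraic isomorphism to precomposition with a rotation by a cube root of unity, and then separate a continuum of embeddings by the projective class of $(A_f(1),A_f(\omega),A_f(\omega^2))$ via Theorem \ref{thrm: multipliers to bilip} and Corollary \ref{crl: n point A A A}; that reduction is carried out correctly. The genuine gap is in the construction. The claim that ``because $B$ is not a function of $z^3$ it distinguishes a generic interior point from its two $\omega$-rotates, so the pair $(z^3,B)$ is injective on $\D$'' is a non-sequitur: not being a function of $z^3$ only says that $B(z)-B(\omega z)$ is not identically zero, hence vanishes on a \emph{finite} set, and a single zero $z_0\in\D\setminus\{0\}$ already gives $(z_0^3,B(z_0))=((\omega z_0)^3,B(\omega z_0))$ and destroys injectivity. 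You flag this yourself as ``the main obstacle,'' but no mechanism is supplied: you would have to prove, for a whole continuum of degree-$3$ collapsing Blaschke products $B$ simultaneously, that $B(z)=B(\omega z)$ has no solution in $\D\setminus\{0\}$, and nothing in the proposal does this. It is precisely to avoid this difficulty that the paper uses the four-coordinate map $f_{\a}=\tfrac12\left(z^3,g_{\a_0},g_{\a_1},g_{\a}\right)$: Lemma \ref{lem: f alpha injective} fixes $g_{\a_0}$, notes that $g_{\a_0}(z)=g_{\a_0}(\omega z)$ has only finitely many solutions in $\D\setminus\{0\}$, and chooses $\a_1$ outside a finite bad set (detected by the non-vanishing of a rational function of $\a_1$) so that $g_{\a_1}$ separates those points; injectivity is then guaranteed while the fourth coordinate $g_{\a}$ remains a completely free parameter for the continuum.

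Two secondary points. First, that $f_B(\D)$ is a multiplier variety is deferred to \cite[Theorem 5.2]{Dav} without its hypotheses being checked; the paper instead observes that its maps are analytic on a larger disc, so once injectivity on $\D$ is established the image is automatically a variety by the argument of \cite[Lemma 6.2]{Kerr}. Second, your treatment of the residual cyclic ambiguity (restricting to a subfamily in which the cyclic representative of the $A$-triple is unique) is workable in principle but again only asserted for an unspecified family; the paper takes all parameters real, so that $f_{\a}(z)=\overline{f_{\a}(\bar z)}$ forces $A_{f_{\a}}(\omega)=A_{f_{\a}}(\omega^2)$, which rules out the nontrivial cyclic shifts whenever $A_{f_{\a}}(1)\ne A_{f_{\a}}(\omega)$, and then shows that $A_{f_{\a}}(1)/A_{f_{\a}}(\omega)\to\infty$ continuously as $\a\to 1^-$, yielding uncountably many distinct classes.
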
 
        \begin{proof}
        Let us denote $1, \o, \o^2$ the third roots of unity. We consider embedding maps $f$ with a single $3$-point self-crossing $f(\o) = f(\o^2) = f(1)$. The basic building block for our embedding maps is a special type of the Blaschke product.
        \begin{lem}
            The Blaschke product 
            \begin{equation} \label{eq: g alpha definition}
                g_{\a}(z) = z \frac{z - \a}{1 - \a z} \frac{z - \b}{1 - \b z}
            \end{equation}
            with $\a \in (-1, 1)$ and
            \begin{equation*}
                \b = \frac{-\a}{1 + \a}
            \end{equation*}
            satisfies 
            \begin{equation*}
                g_{\a}(1) = g_{\a}(\o) = g_{\a}(\o^2) = 1,
            \end{equation*}
            as well as the symmetry condition $g^* = g$, where $g^*(z) = \overline{g(\bar z)}$.
        \end{lem}
        \begin{proof}
            The symmetry condition follows from the fact that $\a$ and $\b$ are real. In particular, $g_{\a}(\o^2) = g_{\a}(\bar \o) = \overline{g_{\a}(\o)}$. Note that $g_{\a}(1) = 1$ is evident, hence, it is enough to show $g_{\a}(\o) = 1$. The last statement follows by direct evaluation.
        \end{proof}

        We consider the embedding maps of the form
        \begin{equation} \label{eq: f alpha definition}
            f(z) = \frac{1}{2} \left( z^3, g_{\a_0}(z), g_{\a_1}(z), g_{\a_2}(z) \right).
        \end{equation}
        First, we need to guarantee that $f$ is indeed an embedding map of an analytic disc attached to the unit sphere. Note that $f$ is analytic in a larger disc and $||f(x)|| = 1 \iff |x| = 1$. Hence, we need to have $f'(z) \ne 0, \, z \in \D$, injectivity in $\D$ as well as $V = f(\D)$ needs to be a variety. For this purpose we fix $\a_0 \in (0, 1)$, now $f'(z) \ne 0, \, z \in \D$ clearly holds. If we have injectivity as well, then $V$ is automatically a variety as $f$ is analytic in a larger disc, see the proof of \cite[Lemma 6.2]{Kerr}. Thus, the goal is to find $\a_1$ such that $\left( z^3, g_{\a_0}(z), g_{\a_1}(z) \right)$ is injective in $\D$, this way we are still free to choose any $\a_2$. 

        \begin{lem} \label{lem: f alpha injective}
            There exists $\a_1$ such that $\left( z^3, g_{\a_0}(z), g_{\a_1}(z) \right)$ is injective in $\D$.
        \end{lem}
        \begin{proof}
            Suppose we have $z \ne w \in \D$ such that 
            \begin{equation*}
                \left( z^3, g_{\a_0}(z), g_{\a_1}(z) \right) = \left( w^3, g_{\a_0}(w), g_{\a_1}(w) \right).    
            \end{equation*} Since $z^3 = w^3$ and $z \ne w$, then $z, w \ne 0$ and either $w = \o z$ or $z = \o w$. By swapping $z$ and $w$ we can always assume that $w = \o z \ne 0$. Hence, we obtain
            \begin{equation} \label{eq: injectivity z g g}
                \left( z^3, g_{\a_0}(z), g_{\a_1}(z) \right) = \left( z^3, g_{\a_0}(\o z), g_{\a_1}(\o z) \right).   
            \end{equation}
            The idea is to take $\a_1$ in such a way that the equation \eqref{eq: injectivity z g g} does not have any solutions in $\D \setminus \{ 0 \}$, meaning that the assumption we made was wrong and $\left( z^3, g_{\a_0}(z), g_{\a_1}(z) \right)$ is injective in $\D$.
            
            Consider the equation $g_{\a_0}(z) = g_{\a_0}(\o z)$. Note that since $g_{\a_0}$ is analytic in a larger disc there can only be finitely many solutions to this equation in $\D \setminus \{ 0 \}$. If there are no solutions, then \eqref{eq: injectivity z g g} also does not have any and we are free to choose any $\a_1 \in (-1, 1)$. If there are solutions, let us denote them $z_1, \ldots, z_N \in \D \setminus \{ 0 \}$. This way $z \in \D \setminus \{ 0 \}$ is a solution for \eqref{eq: injectivity z g g} if and only if $z = z_j$ for some $j = 1, \ldots, N$ and $g_{\a_1}(z_j) = g_{\a_1}(\o z_j)$. To finish the proof let us choose $\a_1$ such that $g_{\a_1}(z_j) \ne g_{\a_1}(\o z_j)$ for any $j = 1, \ldots, N$. For a fixed $j = 1, \ldots, N$ consider a function of $\a_1$
            \begin{equation*}
                h_j(\a_1) = g_{\a_1}(z_j) - g_{\a_1}(\o z_j).
            \end{equation*}
            By \eqref{eq: g alpha definition} it is a rational function in $\a_1$. By plugging $\a_1 = 1$ we see that $h_j$ is not identically zero, as the solutions of $g_1(z) = g_1(\o z)$ are $z = 0$ and $z = \o$ which do not lie in $\D \setminus \{ 0 \}$. This means that there are finitely many values of $\a_1 \in (-1, 1)$ such that $g_{\a_1}(z_j) = g_{\a_1}(\o z_j)$, those are the values for $\a_1$ we do not want to take. Repeating this argument for all $j = 1, \ldots, N$ we remove finitely many points from the interval $(-1, 1)$ and choose $\a_1$ from the resulting set. This guarantees $g_{\a_1}(z_j) \ne g_{\a_1}(\o z_j)$ for any $j = 1, \ldots, N$. Thus, with this value of $\a_1$ the map $\left( z^3, g_{\a_0}(z), g_{\a_1}(z) \right)$ is injective in $\D$.     
        \end{proof}
        We fix a value of $\a_1$ from Lemma \ref{lem: f alpha injective}, and, since $\a_2$ is a free parameter, we write the embedding functions as
        \begin{equation} \label{eq: f alpha with fixed alpha12}
            f_{\a}(z) = \frac{1}{2} \left( z^3, g_{\a_0}(z), g_{\a_1}(z), g_{\a}(z) \right), \quad \a \in (-1, 1).
        \end{equation}

        Let us now show that as $\a$ approaches $1$ from below we get uncountably many analytic discs $f_{\a}(\D)$ such that all their multiplier algebras $\M_{f_{\a}}$ are mutually non-isomorphic. By Theorem \ref{thrm: isomorphic algebras to automorphism} we know that if they are isomorphic, then the isomorphism is given by a composition with a unit disc automorphism $\mu$. By Theorem \ref{thrm: algebras to crossings}, $\mu$ must map the self-crossing ${1, \o, \o^2}$ onto itself.
        \begin{lem} \label{lem: third roots of unity automorphisms}
            Suppose $\mu$ is a unit disc automorphism that maps $\{ 1, \o, \o^2 \}$ onto itself, then either $\mu(z) = z$, $\mu(z) = \o z$ or $\mu(z) = \o^2 z$.
        \end{lem}
        \begin{proof}
            It is enough to show that $\mu(1) = 1$ implies $\mu(z) = z$, since we can always compose $\mu$ with an appropriate rotation. Now, since $\mu$ is a disc automorphism it must preserve the order $1, \o, \o^2$ on the circle while mapping $\{ \o, \o^2 \}$ onto itself. This is only possible if $\mu(\o) = \o$ and $\mu(\o^2) = \o^2$, which, together with $\mu(1) = 1$, gives us $\mu(z) = z$.
        \end{proof}
        Now we know that the only candidates for isomorphism maps are compositions with trivial rotations. Finally, the main instrument we use is Theorem \ref{thrm: multipliers to bilip}, or, specifically, Corollary \ref{crl: n point A A A}.
        For the maps $f, g$ with a single $3$-point self-crossing at ${1, \o, \o^2}$ it reduces to the following: $\M_f = \M_g$ implies
        \begin{equation} \label{eq: 3-self-crossing A A A}
            [A_f(1): A_f(\o): A_f(\o^2)] = [A_g(1): A_g(\o): A_g(\o^2)].
        \end{equation}
        Together with Theorem \ref{thrm: isomorphic algebras to automorphism}, Theorem \ref{thrm: algebras to crossings} and Lemma \ref{lem: third roots of unity automorphisms} we obtain that $\M_f \cong \M_g$ implies the identity \eqref{eq: 3-self-crossing A A A} up to composing $g$ with $\mu(z) = z$, $\mu(z) = \o z$ or $\mu(z) = \o^2 z$. In light of Lemma \ref{lem: A_f under mu} this composition reduces to the cyclic permutation of the expression, i.e., at least on of the three identities
        \begin{align*}
            [A_f(1): A_f(\o): A_f(\o^2)] &= [A_g(1): A_g(\o): A_g(\o^2)], \\
            [A_f(1): A_f(\o): A_f(\o^2)] &= [A_g(\o^2): A_g(1): A_g(\o)], \\
            [A_f(1): A_f(\o): A_f(\o^2)] &= [A_g(\o): A_g(\o^2): A_g(1)]
        \end{align*}
        holds.
        
        For our embedding maps $f_{\a}$ this takes a simpler form. Indeed, by definition \eqref{eq: f alpha definition}, we have the symmetry $f_{\a}(z) = \overline{f_{\a}(\bar z)}$, which means that $A_{f_{\a}}(\o) = A_{f_{\a}}(\o^2)$, since $\o^2 = \bar \o$. Thus, if $\M_{f_{\a}} \cong \M_{f_{\tilde \a}}$, then 
        \begin{equation*}
            [A_{f_{\a}}(1): A_{f_{\a}}(\o): A_{f_{\a}}(\o^2)] = [A_{f_{\tilde \a}}(1): A_{f_{\tilde \a}}(\o): A_{f_{\tilde \a}}(\o^2)]    
        \end{equation*}
        must hold without any cyclic permutations. It remains to directly compute $A_{f_{\a}}(1): A_{f_{\a}}(\o)$ for different $\a$. We will show that $A_{f_{\a}}(1): A_{f_{\a}}(\o)$ continuously tends to infinity as $\a$ approaches $1$ from below. In turn, this will imply that there are uncountably many $\a$ such that $\M_{f_{\a}}$ are all mutually non-isomorphic. 
        
        The continuity directly follows from the fact that $f_{\a}(z)$ and $f'_{\a}(z)$ continuously depend on $\a$ for a fixed $z$, so we only need to show that 
        \begin{equation*}
            \frac{A_{f_{\a}}(1)}{A_{f_{\a}}(\o)} \to \infty, \quad \a \to 1^-.     
        \end{equation*}
        In \eqref{eq: f alpha with fixed alpha12} $\a$ only affects the fourth coordinate, hence, it is enough to show that $|g_{\a}(\o) g'_{\a}(\o)|$ is bounded while $|g_{\a}(1) g'_{\a}(1)|$ goes to infinity for $\a \to 1^-$. Note that $g_{\a}(\o) = g_{\a}(1) = 1$, meaning we need to show that $\left| g'_{\a}(\o)/g_{\a}(\o) \right|$ is bounded while $\left| g'_{\a}(1)/g_{\a}(1) \right|$ goes to infinity. Directly computing the logarithmic derivative of $g_{\a}$ from \eqref{eq: g alpha definition} we get
        \begin{equation*}
            \frac{g'_{\a}(z)}{g_{\a}(z)} = \frac{1}{z} + \frac{1}{z - \a} + \frac{1}{z - \b} + \frac{\a}{1 - \a z} + \frac{\b}{1 - \b z}.
        \end{equation*}
        Note that as $\a \to 1$ we get $\b \to -1/2$, so that
        \begin{equation*}
            \frac{g'_{\a}(\o)}{g_{\a}(\o)} \to \frac{1}{\o} + \frac{1}{\o - 1} + \frac{1}{\o + 1/2} + \frac{1}{1 - \o} + \frac{-1/2}{1 + \o/2}, \quad \a \to 1,
        \end{equation*}
        in particular, it is bounded. While
        \begin{equation*}
            \frac{g'_{\a}(1)}{g_{\a}(1)} = 1 + \frac{1}{1 - \a} + \frac{1}{1 - \b} + \frac{\a}{1 - \a} + \frac{\b}{1 - \b} \to \infty, \quad \a \to 1,
        \end{equation*}
        as we wanted. We conclude that 
        \begin{equation*}
            \frac{A_{f_{\a}}(1)}{A_{f_{\a}}(\o)} \to \infty, \quad \a \to 1^-
        \end{equation*}
        continuously, which means that that there are uncountably many values of $\a$ for which the multiplier algebras $\M_{f_{\a}}$ of analytic discs $f_{\a}(\D)$ are mutually non-isomorphic. At the same time all $f_{\a}$ have exactly the same $3$-point self-crossing at $\{1, \o, \o^2\}$.
        \end{proof}

\subsection*{Acknowledgements}
    I would like to express my gratitude to Orr Shalit for his invaluable support and guidance throughout this project, which originated from my master’s thesis under his supervision. I am also thankful to Oleg Ivrii, Victor Vinnikov, and Eli Shamovich, whose comments and remarks significantly enhanced the exposition of my thesis, and by extension, this paper. Finally, I wish to thank Jeet Sampat for his thoughtful suggestions that helped improve the final manuscript.

\printbibliography 
 
\end{document}